\newcommand{\C}{\mathbb{C}}
\newcommand{\Z}{\mathbb{Z}}
\newcommand{\scP}{\mathcal{P}}
\newcommand{\V}{V({\bf n})}
\newcommand{\K}{K({\bf n})}
\newcommand{\tI}{\tilde{I}_{2m}}
\theoremstyle{plain} 
\newtheorem*{thm*}{Theorem}
\newtheorem{thm}{Theorem}
\newtheorem{lemma}[thm]{Lemma}
\begin{document}

\title{Invariant polynomial functions on tensors under the action of a product of orthogonal groups}
\date{September 2012}

\author{Lauren Kelly Williams}


\maketitle

\begin{abstract}
Let $K$ be the product  $O({n_1})\times O({n_2}) \times \cdots \times O({n_r})$ of orthogonal groups. Let $V = \otimes_{i = 1}^r \C^{n_i}$, the $r$-fold tensor product of defining representations of each orthogonal factor. We compute a stable formula for the dimension of the $K$-invariant algebra of degree $d$ homogeneous polynomial functions on $V$. To accomplish this, we compute a formula for the number of matchings which commute with a fixed permutation. Finally, we provide formulas for the invariants and describe a bijection between a basis for the space of invariants and the isomorphism classes of certain $r$-regular graphs on $d$ vertices, as well as a method of associating each invariant to other combinatorial settings such as phylogenetic trees.
\end{abstract}

\section{Introduction}

Let $r \in \Z_{+}$, $d \in \Z_{\geq 0}$, and let ${\bf n} = (n_1,n_2,\ldots,n_r)$ be a sequence of positive integers. Let $\V = \C^{n_1} \otimes \C^{n_2} \otimes \cdots \otimes \C^{n_r}$ and $G({\bf n}) = GL_{n_1}(\C) \times GL_{n_2}(\C) \times \cdots \times GL_{n_r}(\C)$, where $GL_{n_i}(\C)$ denotes the general linear group of degree $n_i$ over the complex numbers. Finally, we denote by $K({\bf n})$ the product $O({n_1}) \times O({n_2})\times \cdots \times O({n_r})$, where $O({n_i})$ denotes the complex orthogonal group of degree $n_i$ defined by $O({n_i}) = \{g \in GL_{n_i}(\C) \ | \ g^Tg = I \}$. The group $\K$ acts on $\V$ by
\[ (g_1, g_2,\ldots,g_r) \cdot (v_v,v_2,\ldots,v_r) = (g_1v_1) \otimes (g_2v_2) \otimes \cdots \otimes (g_r v_r) \]
for $g_i \in O({n_i})$, $v_i \in \C^{n_i}$.

Let $\scP(\V)$ denote the algebra of complex valued polynomial functions on $\V$. The group $\K$ acts on $\scP(\V)$ in the standard way: for $k \in \K$, $v \in \V$, and $f \in \scP(\V)$, we have $k \cdot f(v) = f(k^Tv)$. The $\K$-invariant subspace of $\scP(V)$, which we denote by $\scP(\V)^{\K}$, is known to be finitely generated \cite{Hilbert}; however, a description of these invariants is incomplete outside of a few particular values of ${\bf n}$. 

We have the standard gradation $\scP(\V) = \bigoplus_{d = 0}^\infty \scP^d(\V)$, where $\scP^d(\V)$ is the subspace of degree $d$ homogeneous polynomials on $\V$. Moreover, the $\K$-invariant subalgebra inherits this gradation. That is, $\scP^{d}(\V)^{\K} = \scP^d(\V) \cap \scP(\V)^{\K}$. 

While $\dim \scP^{d}(\V)^{\K} $ is unknown in general, we show in section~\ref{rep_section} that the dimensions stabilize as the $n_i$'s are sufficiently large. It is in this setting we seek a formula for these dimensions, as well as a description of the invariants themselves. We first note that if $d$ is odd, we immediately obtain
\[ \dim \scP^d(\V)^{\K} = 0 \]
and so we assume $d = 2m$ where $m$ is a positive integer.

We call $\lambda$ a partition of size $d$, denoted $\lambda \vdash d$, if $\lambda = (\lambda_1 \geq \lambda_2 \geq \cdots \geq \lambda_\ell > 0)$ is a weakly decreasing and finite sequence of positive integers such that $\sum_{i=1}^\ell \lambda_i = d$. If $\lambda$ has $b_1$ ones, $b_2$ twos, $b_3$ threes, etc, we say that $\lambda$ has shape $(1^{b_1}2^{b_2}3^{b_3}\cdots)$. We define
\[ z_\lambda = 1^{b_1}2^{b_2}3^{b_3} \cdots b_1!b_2!b_3! \cdots \]
We let $\ell(\lambda)$ denote the length of the partition. Finally, we say $\lambda = (\lambda_1, \lambda_2, \ldots, \lambda_\ell)$ is even if $\lambda_i \in 2\Z$ for all $i$.

Let $\sigma$ be an element of $S_m$, the symmetric group on $m$ letters. When written in disjoint cycle notation, the lengths of the cycles of $\sigma$ form an integer partition of $m$, and we refer to the shape of $\sigma$ as above. We call a permutation $\tau$ a matching if it contains only cycles of length two; that is, $\tau$ is a fixed point free involution. 

In section~\ref{rep_section}, we show that
\begin{thm}\label{main_theorem}
Let $d$ be a positive, even integer and let $\K = O({n_1}) \times O({n_2}) \times \cdots \times O({n_r})$ and $\V =  \C^{n_1} \otimes \C^{n_2} \otimes \cdots \otimes \C^{n_r}$, where $n_1,n_2,\ldots,n_r \geq d$. Then dimension of the space of $\K$-invariant degree $d$ homogeneous polynomial functions on $\V$ is  
\[ \dim \scP^d(\V)^{\K} = \sum_{\lambda \vdash d} \frac{N(\lambda)^r}{z_\lambda} \] 
where $N(\lambda)$ is the number of matchings that commute with a permutation of shape $\lambda$.
\end{thm}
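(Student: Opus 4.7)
The plan is to identify $\scP^d(\V)^{\K}$ with the $S_d$-invariants of a tensor power of the single-factor $O(n)$-invariant space and then apply Burnside's lemma. Since each $O(n_i)$ preserves a symmetric bilinear form, $\V$ is self-dual as a $\K$-module, so $\scP^d(\V)^{\K} \cong \mathrm{Sym}^d(\V)^{\K}$. In characteristic zero the averaging map gives $\mathrm{Sym}^d(\V) \cong (\V^{\otimes d})^{S_d}$, hence
\[
\dim \scP^d(\V)^{\K} \;=\; \dim \bigl(\V^{\otimes d}\bigr)^{\K \times S_d},
\]
with $\K$ acting diagonally on the $d$ tensor factors and $S_d$ permuting them.

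Next I would reorganize $\V^{\otimes d} \cong \bigotimes_{i=1}^{r} (\C^{n_i})^{\otimes d}$, under which $\K$ acts factor-wise and $S_d$ acts diagonally on each $(\C^{n_i})^{\otimes d}$. Since the $r$ orthogonal groups act on distinct tensor slots, the $\K$-invariants split as
\[
\bigl(\V^{\otimes d}\bigr)^{\K} \;\cong\; \bigotimes_{i=1}^{r} \bigl((\C^{n_i})^{\otimes d}\bigr)^{O(n_i)}.
\]
The hypothesis $n_i \geq d$ is exactly Weyl's stable range: the First Fundamental Theorem for $O(n_i)$, together with the absence of Second Fundamental Theorem relations, gives that each factor has a basis $\{\theta_M\}$ indexed by matchings $M$ on $\{1,\ldots,d\}$, where $\theta_M$ is the standard contraction tensor $\sum_{\mathbf{k}} \bigl(\prod_{\{i,j\}\in M}\delta_{k_i,k_j}\bigr)\,e_{k_1}\otimes\cdots\otimes e_{k_d}$. (If $d$ is odd no such matchings exist, recovering the vanishing noted before the theorem.) A direct calculation with $\sigma\cdot(e_{k_1}\otimes\cdots\otimes e_{k_d}) = e_{k_{\sigma^{-1}(1)}}\otimes\cdots\otimes e_{k_{\sigma^{-1}(d)}}$ shows $\sigma\cdot\theta_M = \theta_{\sigma M \sigma^{-1}}$, so under the matching basis $S_d$ acts by conjugation of fixed-point-free involutions.

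Consequently $(\V^{\otimes d})^{\K} \cong \C[\mathcal{M}_d]^{\otimes r}$ as $S_d$-modules, where $\mathcal{M}_d$ is the set of matchings on $d$ points and $S_d$ acts diagonally by conjugation. Burnside's lemma then gives
\[
\dim \bigl(\C[\mathcal{M}_d]^{\otimes r}\bigr)^{S_d} \;=\; \frac{1}{d!}\sum_{\sigma\in S_d} N(\sigma)^r,
\]
since each of the $r$ tensor factors contributes a permutation matrix whose trace at $\sigma$ counts matchings fixed by conjugation by $\sigma$, i.e.\ those commuting with $\sigma$. Because $N(\sigma)$ depends only on the cycle shape $\lambda$ of $\sigma$, grouping by conjugacy class and using $|\{\sigma \in S_d : \mathrm{shape}(\sigma)=\lambda\}| = d!/z_\lambda$ collapses the sum to $\sum_{\lambda\vdash d} N(\lambda)^r / z_\lambda$.

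I expect the main obstacle to be the stable-range step: one needs not only that matchings span the $O(n_i)$-invariants (the First Fundamental Theorem) but that they are linearly independent for $n_i \geq d$, which requires ruling out Second Fundamental Theorem relations in this range. One must also verify carefully that the permutation action of $S_d$ on tensor factors really corresponds to conjugation on matchings under this identification. Once these two ingredients are in place, the remaining Burnside computation and the collapse to cycle types are essentially bookkeeping.
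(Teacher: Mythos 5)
Your proof is correct, but it reaches the key identity $\dim \scP^d(\V)^{\K} = \dim \C[\tI^r]^{S_d}$ by a genuinely different route than the paper. The paper decomposes $\scP^d(\V)$ into $G({\bf n})$-irreducibles with Kronecker-coefficient multiplicities $g_{\lambda^{(1)}\cdots\lambda^{(r)}}$, applies the Cartan--Helgason theorem to reduce to a sum of $g_{\lambda^{(1)}\cdots\lambda^{(r)}}$ over even partitions, and then reinterprets that sum via the Gelfand pair $(S_{2m},H_m)$ and the Peter--Weyl decomposition of $\C[S_{2m}\times\cdots\times S_{2m}]$ as $\dim\C[(S_{2m}/H_m)^r]^{S_{2m}}$. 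You instead use self-duality of $\V$ and $\mathrm{Sym}^d(\V)\cong(\V^{\otimes d})^{S_d}$ to reduce to $(\V^{\otimes d})^{\K\times S_d}$, factor the $\K$-invariants as $\bigotimes_i ((\C^{n_i})^{\otimes d})^{O(n_i)}$, and invoke the tensor form of the First Fundamental Theorem for $O(n_i)$ to get a matching-indexed basis on which $S_d$ acts by conjugation. Both proofs finish identically with Burnside and the count $d!/z_\lambda$ of permutations of shape $\lambda$. Your route is more elementary and has the advantage of producing the explicit contraction invariants $\theta_M$ up front (the paper only writes these down in Section~\ref{graph_section}), making the bijection with $r$-tuples of matchings transparent; the paper's route yields as a byproduct the identity expressing the dimension as a sum of Kronecker coefficients over even partitions, which parallels the unitary-group case of \cite{HWW}. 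The one point you correctly flag as needing care --- linear independence of the $\theta_M$ for $n_i\geq d$, i.e.\ the absence of Second Fundamental Theorem relations in the stable range --- is a standard fact but is genuinely where your argument uses the hypothesis on the $n_i$; the paper absorbs the corresponding stability into the condition $\ell(\lambda^{(i)})\leq n_i$ in the Cartan--Helgason step. (Incidentally, your final bookkeeping is stated more carefully than the paper's own last display, which omits the factor $(2m)!/z_\lambda$ when passing from the sum over $g\in S_{2m}$ to the sum over partitions, though the theorem statement itself is correct.)
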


We determine the number of matchings which commute with a given permutation in section~\ref{combinatorics_section}. In particular, we have

\begin{thm}\label{Nlambda_theorem}
Given a permutation $\sigma$ with shape $\lambda = (1^{b_1}2^{b_2} \cdots t^{b_t})$, the number of matchings that commute with $\sigma$ is given by
\[ N(\lambda) = N((1^{b_1})) \cdot N((2^{b_2})) \cdot \cdots \cdot N((t^{b_t})) \] 
where 
\[ N((a^{b})) = \begin{cases}
				0 & \mbox{if } a \mbox{ odd and } b \mbox{ odd} \\
				\frac{b!a^{b/2}}{2^{b/2}(\frac{b}{2})!} & \mbox{if } a \mbox{ odd and } b \mbox{ even} \\
				\sum_{i=1, i \mbox{ \tiny odd}}^b \frac{b!a^{(b-i)/2}}{i!(\frac{b-i}{2})! 2^{(b-i)/2}} & \mbox{if } a \mbox{ even and } b \mbox{ odd} \\
				\sum_{i=0, i \mbox{ \tiny even}}^b \frac{b!a^{(b-i)/2}}{i!(\frac{b-i}{2})! 2^{(b-i)/2}} & \mbox{if } a \mbox{ even and } b \mbox{ even}
		\end{cases} \]  

\end{thm}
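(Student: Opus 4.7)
The plan is to exploit the fact that a matching $\tau$ commuting with $\sigma$ must permute the cycles of $\sigma$ among themselves (conjugation preserves cycle type) and in fact must send each cycle of length $a$ to another cycle of length $a$. Thus $\tau$ restricts to a fixed-point-free involution on the union of the supports of the length-$a$ cycles of $\sigma$, for each $a$, and conversely any choice of such restrictions assembles (by taking the product) to a matching commuting with $\sigma$. This yields the multiplicativity
\[ N(\lambda) = \prod_{a} N\bigl((a^{b_a})\bigr), \]
reducing the problem to computing $N((a^b))$.

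For the pure case $\sigma = \sigma_1 \sigma_2 \cdots \sigma_b$ with the $\sigma_j$ disjoint $a$-cycles, I would view $\tau$ as an involution on the set $\{\sigma_1,\ldots,\sigma_b\}$ of cycles: it either fixes a cycle set-wise or swaps two distinct cycles. Suppose $i$ cycles are self-paired and $b-i$ are paired off; necessarily $b-i$ is even. First, fix a cycle $C = (c_0,c_1,\ldots,c_{a-1})$ and write $\tau(c_j) = c_{\pi(j)}$; commuting with the cyclic shift $\sigma|_C$ forces $\pi(j+1) \equiv \pi(j)+1 \pmod{a}$, so $\pi(j) = j+k$ for some $k$. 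The involution condition $2k \equiv 0 \pmod a$ together with fixed-point-freeness $k \not\equiv 0 \pmod a$ shows that such a $\tau|_C$ exists, uniquely with $k = a/2$, if and only if $a$ is even. Second, if $\tau$ swaps two cycles $C = (c_0,\ldots,c_{a-1})$ and $C' = (c'_0,\ldots,c'_{a-1})$, the same commutation argument forces $\tau(c_j) = c'_{j+k}$ for some offset $k \in \mathbb{Z}/a\mathbb{Z}$, while the involution condition is automatic; this gives exactly $a$ choices for each pair.

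It remains to count matchings on the $b$ cycles with $i$ self-paired and $b-i$ paired off, weighted by the internal choices. There are $\binom{b}{i}$ ways to select the self-paired cycles, each contributing the factor $1$ (and requiring $a$ even when $i > 0$); the remaining $b-i$ cycles are matched in $(b-i)!/\bigl(2^{(b-i)/2}((b-i)/2)!\bigr)$ ways, and each of the $(b-i)/2$ pairs contributes a factor $a$. Assembling, the contribution from a given $i$ is
\[ \binom{b}{i}\cdot\frac{(b-i)!}{2^{(b-i)/2}((b-i)/2)!}\cdot a^{(b-i)/2} \;=\; \frac{b!\, a^{(b-i)/2}}{i!\,((b-i)/2)!\,2^{(b-i)/2}}. \]
Summing over admissible $i$ recovers the four cases: if $a$ is odd only $i=0$ is legal (giving $0$ when $b$ is odd), while if $a$ is even one sums over $i$ with $i\equiv b \pmod 2$.

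The main obstacle is the classification step, in particular verifying carefully that commutation with the cyclic shift on a single cycle (or between two cycles) pins $\tau$ down to a one-parameter family of offsets and handling the involution condition correctly in each case. The multiplicative decomposition and the final summation are then routine bookkeeping.
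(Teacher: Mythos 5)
Your proposal is correct and follows essentially the same route as the paper: you reduce to cycles of equal length (the paper's Lemma~\ref{length_matchings}), classify the possible restrictions of the matching to a single self-paired cycle or a swapped pair of cycles via the offset argument (the paper's Lemma~\ref{brick_matchings}), and then count involutions on the set of $b$ cycles with $i$ fixed points weighted by a factor of $a$ per transposed pair, exactly as in the paper's case analysis in Section~\ref{formulas}. The only cosmetic difference is that you analyze a set-wise fixed cycle on its own, whereas the paper's lemma is phrased for a pair of equal-length cycles; the computations and the final bookkeeping are identical.
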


The notion of matchings has surprisingly deep connections to a variety of areas of research. Classical Schur-Weyl duality describes a relationship between irreducible finite dimensional representations of the general linear group and the symmetric group. The natural actions of each of these groups on the tensor space $\bigotimes^k(\C^n)$ centralize each other, resulting in the multiplicity free decomposition 
\[ \otimes^k\C^n \cong \mathop{\bigoplus_{\lambda \vdash k}}_{\ell(\lambda) \leq n} F^\lambda \otimes W^\lambda \]
where $F^\lambda$ and $W^\lambda$ are the irreducible representations of $GL_n$ and $S_n$, respectively, associated to the partition $\lambda$. In 1937, Richard Brauer defined an algebra which plays the role of the symmetric group in a similar statement on the representation theory of the orthogonal group \cite{Brauer}. For a partition $\lambda = (\lambda_1, \lambda_2, \ldots)$, let $\lambda' = (\lambda'_1, \lambda'_2, \ldots)$ denote the conjugate partition (thus $\lambda'_j$ is the number of boxes in the $j$th column of the Young diagram of $\lambda$). We have
\[ \otimes^k\C^n \cong \bigoplus^{\lfloor k/2 \rfloor}_{i = 0} \mathop{\bigoplus_{\lambda \vdash (k - 2i)}}_{\lambda'_1 + \lambda'_2 \leq n} V^\lambda \otimes U^\lambda \]
where $V^\lambda$ and $U^\lambda$ denote the respective irreducible representations of the Brauer algebra $\mathcal{B}_k(n)$ and $O(n)$. The dimension of the algebra $\mathcal{B}_k(n)$ is $(2k - 1)(2k - 3) \cdots 3 \cdots 1$,
the number of matchings on $2k$ elements. These basis elements of the algebra are frequently depicted as graphs; the diagram below represents a possible basis element of the algebra $\mathcal{B}_6(n)$:

\begin{center}
\begin{tikzpicture}
\node[below] at (0,1) {};   \fill (0,1) circle (2pt);
\node[below] at (1,1) {};   \fill (1,1) circle (2pt);
\node[below] at (2,1) {};   \fill (2,1) circle (2pt);
\node[below] at (3,1) {};   \fill (3,1) circle (2pt);
\node[below] at (4,1) {};   \fill (4,1) circle (2pt);
\node[below] at (5,1) {};   \fill (5,1) circle (2pt);
\node[below] at (0,0) {};  \fill (0,0) circle (2pt);
\node[below] at (1,0) {};   \fill  (1,0) circle (2pt);
\node[below] at (2,0) {};   \fill  (2,0) circle (2pt);
\node[below] at (3,0) {};   \fill (3,0) circle (2pt);
\node[below] at (4,0) {};   \fill  (4,0) circle (2pt);
\node[below] at (5,0) {};   \fill (5,0) circle (2pt);
\draw(0,1) edge[bend left] (4,1);
\draw(0,0) edge (1,1);
\draw(1,0) edge[bend right] (5,0);
\draw(2,0) edge (2,1);
\draw(3,0) edge (4,0);
\draw(3,1) edge[bend right] (5,1);
\end{tikzpicture}
\end{center}

The number of matchings on a set of elements has applications outside of representation theory as well. Numerous fields in biology use graphs called phylogenetic trees to illustrate inferred evolutionary relationships. These graphs are full rooted binary trees in which each node, with the exception of the leaves, has exactly two children. Each node could, for example, represent a particular species. A parent node (that is, one that is not a leaf) would represent the most recent genetic ancestor of the two child species. Diaconis and Holmes \cite{Diaconis} have described a bijection between matchings on $2n$ elements and phylogenetic trees with $n+1$ leaves, which we recount here. 

We first describe the procedure for building a tree with $n+1$ labelled leaves from a matching on the set $\{1, 2, \ldots, 2n\}$, which we illustrate with the example $\tau = (1 \ 4)(2 \ 3)(5 \ 8)(6 \ 7)$.  The resulting tree will have leaves labelled with the set $\{1, 2, \ldots, n+1\}$; the ancestors of the tree will be labelled $\{n+2, n+3, \ldots, 2n\}$. First, look for any pairs in the matching that contain only numbers at most $n+1$. In our specific example, we have two such cycles, $(1 \ 4)$ and $(2 \ 3)$. Each such cycle defines a sibling pair of leaves. Choose the pair with the smallest child, in this case, $(1 \ 4)$. Label the parent of this pair with the smallest  ancestor, 6. The other sibling pair will be labelled with the next available ancestor, 7. Since 6 and 7 appear in the same cycle of the matching, they must also be siblings in the tree. Their parent will be the last available ancestor, 8. Finally, 8 must be paired with 5, which will form its own leaf. The resulting tree is shown on the left below. The corresponding tree with labelled leaves is on the right.
\begin{center}
\begin{tikzpicture}
      	\node[below] at (0,0) {1};
	\node[below] at (1,0) {4};
	\node[below] at (2,0) {2};
	\node[below] at (3,0)  {3};
	\node[left] at (0.5,1) {6};
	\node[right] at (2.5,1)  {7};
	\node[below] at (1.5,2) {8};
	\node[below] at (-0.5,2)  {5};

	\draw[*-*, shorten >=-2, shorten <=-2] (0.5,3) to (-0.5,2);
	\draw[-*, shorten >=-2] (0.5,3) to (1.5,2);
	\draw[-*, shorten >=-2] (1.5,2) to (0.5,1);
	\draw[-*, shorten >=-2] (1.5,2) to (2.5,1);
	\draw[-*, shorten >=-2] (2.5,1) to (3,0);
	\draw[-*, shorten >=-2] (2.5,1) to (2,0);
	\draw[-*, shorten >=-2] (0.5,1) to (0,0);
	\draw[-*, shorten >=-2] (0.5,1) to (1,0);
\end{tikzpicture}
\hspace{2cm}
\begin{tikzpicture}
      	\node[below] at (0,0) {1};
	\node[below] at (1,0) {4};
	\node[below] at (2,0) {2};
	\node[below] at (3,0)  {3};
	\node[left] at (0.5,1) {};
	\node[right] at (2.5,1)  {};
	\node[below] at (1.5,2) {};
	\node[below] at (-0.5,2)  {5};

	\draw[*-*, shorten >=-2, shorten <=-2] (0.5,3) to (-0.5,2);
	\draw[-*, shorten >=-2] (0.5,3) to (1.5,2);
	\draw[-*, shorten >=-2] (1.5,2) to (0.5,1);
	\draw[-*, shorten >=-2] (1.5,2) to (2.5,1);
	\draw[-*, shorten >=-2] (2.5,1) to (3,0);
	\draw[-*, shorten >=-2] (2.5,1) to (2,0);
	\draw[-*, shorten >=-2] (0.5,1) to (0,0);
	\draw[-*, shorten >=-2] (0.5,1) to (1,0);
\end{tikzpicture}
\end{center}
Note that there is more than one possible rule to create such a correspondence.

We now sketch a method of finding a matching when given a tree. We start with a binary rooted tree with $n+1$ labelled leaves. We look for the pair of siblings with the smallest child, and label the parent of these children with $n+2$. Repeat this process until all nodes (except the root) have been labelled. The matching defined by this tree is formed by pairing siblings into cycles. A particular example is shown below. The tree on the left is a binary tree with 7 leaves. The center diagram is obtained by labeling the nodes of the tree as described, and the corresponding permutation is on the right.

\begin{center}
\begin{tikzpicture}
\node[below] at (0,0) {5}; \fill (0,0) circle (2pt);
\node[below] at (1,0) {6}; \fill (1,0) circle (2pt);
\fill (0.5,1) circle (2pt);
\node[below] at (-0.5,1) {2}; \fill (-0.5,1) circle (2pt);
\node[below] at (-1.5,1) {7}; \fill (-1.5,1) circle (2pt);
\fill (-1,2) circle (2pt);
\node[below] at (-2,2) {1}; \fill (-2,2) circle (2pt);
\fill (-1.5,3) circle (2pt);
\node[below] at (1.5,1) {3}; \fill (1.5,1) circle (2pt);
\fill (1,2) circle (2pt);
\node[below] at (0,2) {4}; \fill (0,2) circle (2pt);
\fill (0.5,3) circle (2pt);
\fill (-0.5,4) circle (2pt);

\draw (0,0) edge (0.5,1);
\draw (1,0) edge (0.5,1);
\draw (-0.5,1) edge (-1,2);
\draw (-1.5,1) edge (-1,2);
\draw (-1.5,3) edge (-2,2);
\draw (-1.5,3) edge (-1,2);
\draw(1,2) edge (1.5,1);
\draw(1,2) edge (0.5,1);
\draw(0,2) edge (0.5,3);
\draw(0.5,3) edge (1,2);
\draw(0.5,3) edge (-0.5,4);
\draw(-1.5,3) edge (-0.5,4);
\end{tikzpicture}
\hspace{1cm}
\begin{tikzpicture}
\node[below] at (0,0) {5}; \fill (0,0) circle (2pt);
\node[below] at (1,0) {6}; \fill (1,0) circle (2pt);
\node[right] at (0.5,1) {10}; \fill (0.5,1) circle (2pt);
\node[below] at (-0.5,1) {2}; \fill (-0.5,1) circle (2pt);
\node[below] at (-1.5,1) {7}; \fill (-1.5,1) circle (2pt);
\node[right] at (-1,2) {8}; \fill (-1,2) circle (2pt);
\node[below] at (-2,2) {1}; \fill (-2,2) circle (2pt);
\node[left] at (-1.5,3) {9}; \fill (-1.5,3) circle (2pt);
\node[below] at (1.5,1) {3}; \fill (1.5,1) circle (2pt);
\node[right] at (1,2) {11}; \fill (1,2) circle (2pt);
\node[below] at (0,2) {4}; \fill (0,2) circle (2pt);
\node[right] at (0.5,3) {12}; \fill (0.5,3) circle (2pt);
\fill (-0.5,4) circle (2pt);

\draw (0,0) edge (0.5,1);
\draw (1,0) edge (0.5,1);
\draw (-0.5,1) edge (-1,2);
\draw (-1.5,1) edge (-1,2);
\draw (-1.5,3) edge (-2,2);
\draw (-1.5,3) edge (-1,2);
\draw(1,2) edge (1.5,1);
\draw(1,2) edge (0.5,1);
\draw(0,2) edge (0.5,3);
\draw(0.5,3) edge (1,2);
\draw(0.5,3) edge (-0.5,4);
\draw(-1.5,3) edge (-0.5,4);

\node[right] at (2.5,2) {$(1 \ 8)(2 \ 7)(3 \ 10)(4 \ 11)(5 \ 6)(9 \ 12)$};
\end{tikzpicture}

\end{center}

In section~\ref{graph_section}, we show that given an $r$-tuple of matchings $(\tau_1, \tau_2, \ldots, \tau_r)$, each on $2m$ letters, we can describe an invariant in $\scP^{2m}(\V)^{\K}$. The symmetric group $S_{2m}$ acts on such a tuple by simultaneous conjugation; that is, for $g \in S_{2m}$, we define
\[ g \cdot (\tau_1, \tau_2, \ldots, \tau_r) = (g\tau_1 g^{-1}, g\tau_2 g^{-1}, \ldots, g\tau_r g^{-1}) \]
We show that the orbits of this action are associated to a generator of the algebra  $\scP^{2m}(\V)^{\K}$, and are in bijection to the isomorphism classes of certain graphs. 

We can therefore consider an action of $S_{2m}$ on $r$-tuples of phylogenetic trees with $m+1$ leaves. Using the method described above, we  write an $r$-tuple of trees as an $r$-tuple of matchings, apply the action of simultaneous conjugation by an element of $S_{2m}$, and then draw the list of trees associated to the resulting $r$-tuple of matchings.  Hence, we define an action of $S_{2m}$ on a forest of $r$ phylogenetic trees, each with $m+1$ leaves.

To illustrate, suppose we choose a tuple of three trees, each with four leaves. Labeling the roots of the trees 1 through 3 and adjoining these roots to a common vertex creates a forest:
\begin{center}
\begin{tikzpicture}
      	\node (1) at (0.5,3) [circle,draw, thick,inner sep=1.8pt] {1};
      	\node[below] at (0,0) {1};
	\node[below] at (1,0) {3};
	\node[below] at (1.5,1) {2};
	\node[below] at (0,2)  {4};
	\draw[-*, shorten >=-2] (1) to (1,2);
	\draw[-*, shorten >=-2] (1) to (0,2);
	\draw[-*, shorten >=-2] (1,2) to (1.5,1);
	\draw[-*, shorten >=-2] (1,2) to (0.5,1);
	\draw[-*, shorten >=-2] (0.5,1) to (0,0);
	\draw[-*, shorten >=-2] (0.5,1) to (1,0);
	
	\node (2) at (4,3) [circle,draw, thick,inner sep=1.8pt] {2};
      	\node[below] at (2.5,1) {1};
	\node[below] at (3.5,1) {3};
	\node[below] at (4.5,1) {2};
	\node[below] at (5.5,1)  {4};
	\draw[-*, shorten >=-2] (2) to (5,2);
	\draw[-*, shorten >=-2] (2) to (3,2);
	\draw[-*, shorten >=-2] (5,2) to (5.5,1);
	\draw[-*, shorten >=-2] (5,2) to (4.5,1);
	\draw[-*, shorten >=-2] (3,2) to (3.5,1);
	\draw[-*, shorten >=-2] (3,2) to (2.5,1);
	
	\node (3) at (7,3) [circle,draw, thick,inner sep=1.8pt] {3};
      	\node[below] at (6.5,0) {2};
	\node[below] at (7.5,0) {4};
	\node[below] at (8,1) {3};
	\node[below] at (6.5,2)  {1};
	\draw[-*, shorten >=-2] (3) to (7.5,2);
	\draw[-*, shorten >=-2] (3) to (6.5,2);
	\draw[-*, shorten >=-2] (7.5,2) to (8,1);
	\draw[-*, shorten >=-2] (7.5,2) to (7,1);
	\draw[-*, shorten >=-2] (7,1) to (6.5,0);
	\draw[-*, shorten >=-2] (7,1) to (7.5,0);
	
	\draw[-o, shorten >=-2] (1) to (4,4);
	\draw[-o, shorten >=-2] (2) to (4,4);
	\draw[-o, shorten >=-2] (3) to (4,4);
\end{tikzpicture}
\end{center}
By following the process outlined earlier, we associate this forest to the 3-tuple of matchings
\[ (\tau_1, \tau_2, \tau_3) = ((1 \ 3)(2 \ 5)(4 \ 6), (1 \ 3)(2 \ 4)(5 \ 6), (1 \ 6)(2 \ 4)(3 \ 5)) \]
We choose a permutation $g = (1 \ 3 \ 5)(2 \ 4)(6)$ to act on this tuple:
\[ (g\tau_1g^{-1}, g\tau_2g^{-1}, g\tau_3g^{-1}) = ((1 \ 4)(2 \ 6)(3 \ 5), (1 \ 6)(2 \ 4)(3 \ 5), (1 \ 5)(2 \ 4)(3 \ 6)) \]
and draw the forest associated to the result:
\smallskip
\begin{center}
\begin{tikzpicture}
      	\node (1) at (0.5,3) [circle,draw, thick,inner sep=1.8pt] {1};
      	\node[below] at (0,0) {1};
	\node[below] at (1,0) {4};
	\node[below] at (1.5,1) {3};
	\node[below] at (0,2)  {2};
	\draw[-*, shorten >=-2] (1) to (1,2);
	\draw[-*, shorten >=-2] (1) to (0,2);
	\draw[-*, shorten >=-2] (1,2) to (1.5,1);
	\draw[-*, shorten >=-2] (1,2) to (0.5,1);
	\draw[-*, shorten >=-2] (0.5,1) to (0,0);
	\draw[-*, shorten >=-2] (0.5,1) to (1,0);
	
	\node (2) at (3.5,3) [circle,draw, thick,inner sep=1.8pt] {2};
      	\node[below] at (3,0) {2};
	\node[below] at (4,0) {4};
	\node[below] at (3,2) {1};
	\node[below] at (4.5,1)  {3};
	\draw[-*, shorten >=-2] (2) to (3,2);
	\draw[-*, shorten >=-2] (2) to (4,2);
	\draw[-*, shorten >=-2] (4,2) to (4.5,1);
	\draw[-*, shorten >=-2] (4,2) to (3.5,1);
	\draw[-*, shorten >=-2] (3.5,1) to (4,0);
	\draw[-*, shorten >=-2] (3.5,1) to (3,0);
	
	\node (3) at (6.5,3) [circle,draw, thick,inner sep=1.8pt] {3};
      	\node[below] at (6,0) {2};
	\node[below] at (7,0) {4};
	\node[below] at (7.5,1) {1};
	\node[below] at (6,2)  {3};
	\draw[-*, shorten >=-2] (3) to (7,2);
	\draw[-*, shorten >=-2] (3) to (6,2);
	\draw[-*, shorten >=-2] (7,2) to (7.5,1);
	\draw[-*, shorten >=-2] (7,2) to (6.5,1);
	\draw[-*, shorten >=-2] (6.5,1) to (6,0);
	\draw[-*, shorten >=-2] (6.5,1) to (7,0);
	
	\draw[-o, shorten >=-2] (1) to (3.5,4);
	\draw[-o, shorten >=-2] (2) to (3.5,4);
	\draw[-o, shorten >=-2] (3) to (3.5,4);
\end{tikzpicture}
\end{center}

In \cite{HWW}, a bijection is described between the invariants defined on the tensor space  $\otimes_{i = 1}^r \C^{n_i}$ under the action of a product of unitary groups and isomorphism classes of finite coverings of connected simple graphs. In section~\ref{graph_section}, we seek a similar graphical interpretation of the invariants on the same space under the action of orthogonal groups. Specifically, we produce formulas for the invariants as well as a bijection between a basis for the space of invariants and isomorphism classes of edge-colored $r$-regular graphs on $d$ vertices.

\bigskip

\section{Proof of Theorem~\ref{main_theorem}}\label{rep_section}

Let $m$ be a positive integer and let $d = 2m$. For each partition $\lambda$ of $d$ with at most $n$ parts, there exists an irreducible representation of $GL_n(\C)$ with highest weight indexed by $\lambda$, which we denote by $F^\lambda_n$ following \cite{GW}. If $\lambda(i) \vdash d$ for $1 \leq i \leq r$, then  
\[ F^{\lambda^{(1)}}_{n_1} \otimes F^{\lambda^{(2)}}_{n_2}  \otimes \cdots \otimes F^{\lambda^{(r)}}_{n_r} \]
is an irreducible representation of $G({\bf n})$ which embeds in $\scP^d(\V)$ with multiplicity denoted by $g_{\lambda^{(1)}\lambda^{(2)}\cdots\lambda^{(r)}}$. That is,
\[ \scP^d(\V) \cong \mathop{\bigoplus_{\lambda^{(i)} \vdash d}}_{\ell(\lambda^{(i)}) \leq n_i} g_{\lambda^{(1)}\lambda^{(2)}\cdots\lambda^{(r)}} F^{\lambda^{(1)}}_{n_1} \otimes F^{\lambda^{(2)}}_{n_2}  \otimes \cdots \otimes F^{\lambda^{(r)}}_{n_r} \]
We can now write the $\K$-invariants as
\begin{align*}
\left[\scP^d(\V)\right]^{\K} &\cong \mathop{\bigoplus_{\lambda^{(i)} \vdash d}}_{\ell(\lambda^{(i)}) \leq n_i} g_{\lambda^{(1)}\lambda^{(2)}\cdots\lambda^{(r)}} \left( F^{\lambda^{(1)}}_{n_1} \otimes F^{\lambda^{(2)}}_{n_2}  \otimes \cdots \otimes F^{\lambda^{(r)}}_{n_r}\right)^{\K} \\
& \cong \mathop{\bigoplus_{\lambda^{(i)} \vdash d}}_{\ell(\lambda^{(i)}) \leq n_i} g_{\lambda^{(1)}\lambda^{(2)}\cdots\lambda^{(r)}} \left( F^{\lambda^{(1)}}_{n_1}\right)^{O({n_1})} \otimes \left(F^{\lambda^{(2)}}_{n_2}\right)^{O({n_2})}  \otimes \cdots \otimes \left( F^{\lambda^{(r)}}_{n_r}\right)^{O({n_r})}
\end{align*}

The Cartan-Helgason Theorem \cite{GW} tells us that if $F^\lambda_n$ is an irreducible representation of $GL_n(\C)$, then $\dim\left(F^\lambda_n\right)^{O(n)}$ is at most one. In particular, we have
\[ \dim\left(F^\lambda_n \right)^{O(n)} = \begin{cases} 1 & \mbox{if $\lambda$ is even} \\ 0 & \mbox {otherwise} \end{cases} \]
and so
\begin{equation} \label{first_g_sum}
\dim \left[\scP^d(\V)\right]^{\K}  = \mathop{ \mathop{\sum_{\lambda^{(1)}, \ldots, \lambda^{(r)}}}_{\lambda^{(i)} \vdash 2m}}_{\lambda^{(i)} \mbox{ \tiny even}}  g_{\lambda^{(1)}\lambda^{(2)}\cdots\lambda^{(r)}}
\end{equation}
The dimensions will stabilize when the sum is taken over all even partitions of $2m$; that is, when $n_1, n_2, \ldots, n_r \geq 2m$. 

We next recall Schur-Weyl duality, which will allow us to find another interpretation of Equation~\ref{first_g_sum}. While $GL_n$ acts on the space $\otimes^m \C^n$ by simultaneous matrix multiplication, the symmetric group acts on the same space by permuting tensor factors. That is, given $x \in GL_n$ and a permutation $\sigma \in S_m$, we have
\[ x (v_1 \otimes v_2 \otimes \cdots \otimes v_m) = xv_1 \otimes xv_2 \otimes \cdots \otimes xv_m \]
and
\[ \sigma (v_1 \otimes v_2 \otimes \cdots \otimes v_m) = v_{\sigma(1)} \otimes v_{\sigma(2)}\otimes \cdots \otimes v_{\sigma(m)} \]
for $v_1, v_2, \ldots , v_m \in \C^n$. The subalgebras of End($\otimes^m\C^n$) generated by each of these actions are full mutual commutants. Hence, we obtain a multiplicity free decomposition
\[ \otimes^m\C^n \cong F^\lambda_n \otimes U^\lambda_m \]
where $U^\lambda_m$ is an irreducible representation of $S_m$ indexed by the partition $\lambda$.

Let $U^\lambda$ denote the irreducible representation of the symmetric group $S_{2m}$ indexed by the partition $\lambda$. We have
\begin{equation}\label{S_2m}
\dim( U^{\lambda(1)} \otimes  U^{\lambda(2)} \otimes \cdots \otimes  U^{\lambda(r)})^{S_{2m}} = g_{\lambda^{(1)}\lambda^{(2)}\cdots\lambda^{(r)}}
\end{equation}
where each $\lambda(i) \vdash 2m$. These are in fact the same multiplicities that appear in the decomposition of representations of the general linear group \cite[Prop. 3]{HeroWillenbring}. The numbers $g_{\lambda(1)\lambda(2)\cdots\lambda(r)} $ are known as the Kronecker coefficients, and are known to be hard to compute. 

Fix $\tau_0 = (1 \ \ 2)(3 \ \ 4) \cdots (2m-1 \ \ 2m)$ in $S_{2m}$. Denote by $H_m$ the centralizer\footnote{$H_m$ is isomorphic to the hyperoctahedral group, the group of symmetries of a hypercube of dimension $m$.} of $\tau_0$ in $S_{2m}$. We note that $(S_{2m}, H_m)$ is a Gelfand pair \cite{Stanley}, and so $\dim(U^{\lambda})^{H_m} $ is at most one. In fact, we have
\[ \dim(U^{\lambda})^{H_m} = \begin{cases} 1 & \mbox{if } \lambda \mbox{ is even} \\
							0 & \mbox{otherwise} \end{cases} \]
Thus, 
\begin{equation}\label{H_m}
 \dim[ U^{\lambda(1)} \otimes  U^{\lambda(2)} \otimes \cdots \otimes  U^{\lambda(r)} ]^{H_m \times H_m \times \cdots \times H_m} = \begin{cases} 1 & \mbox{if } \lambda(i) \mbox{ is even for all }  i \in \{1, 2, \ldots, r\} \\
							0 & \mbox{otherwise} \end{cases} 
\end{equation}
As an immediate consequence of Peter-Weyl decomposition, we have
\[ \mathbb{C}[S_{2m}] \cong \bigoplus_{\lambda} U^\lambda \otimes U^\lambda \]
We can now describe the group algebra of a product of $r$ copies of $S_{2m}$ as follows:
\begin{align*}\label{alg}
\mathbb{C}[S_{2m} \times S_{2m} \times \cdots \times S_{2m}] &\cong  \mathbb{C}[S_{2m}] \otimes \mathbb{C}[S_{2m}] \otimes \cdots \otimes \mathbb{C}[S_{2m}]  \\
			&\cong \left( \bigoplus_{\lambda} U^\lambda \otimes U^\lambda \right) \otimes \left( \bigoplus_{\lambda} U^\lambda \otimes U^\lambda \right) \otimes \cdots \otimes \left( \bigoplus_{\lambda} U^\lambda \otimes U^\lambda \right) \\
			&\cong \bigoplus_{\lambda^{(1)},\ldots,\lambda^{(r)}} \left(  U^{\lambda^{(1)}} \otimes U^{\lambda^{(1)}} \right) \otimes \cdots \otimes \left(  U^{\lambda^{(r)}} \otimes U^{\lambda^{(r)}} \right)
\end{align*}
Combining this fact with Equations \ref{S_2m} and \ref{H_m}, we obtain
\begin{align*}
\dim \mathbb{C}[S_{2m} &\times S_{2m} \times \cdots \times S_{2m}]^{S_{2m} \times H_m \times \cdots \times H_m} \\
	&= \mathop{\sum_{\lambda^{(1)}, \ldots, \lambda^{(r)}}}_{\lambda^{(i)} \vdash 2m} \dim\left[ \left(U^{\lambda^{(1)}} \otimes \cdots \otimes U^{\lambda^{(r)}}\right)^{S_{2m}} \otimes \left(U^{\lambda^{(1)}} \otimes \cdots \otimes U^{\lambda^{(r)}}\right)^{H_m \times \cdots \times H_m}  \right] \\
	&= \mathop{ \mathop{\sum_{\lambda^{(1)}, \ldots, \lambda^{(r)}}}_{\lambda^{(i)} \vdash 2m}}_{\lambda^{(i)} \mbox{ even}}  g_{\lambda^{(1)}\lambda^{(2)}\cdots\lambda^{(r)}}
\end{align*}

So far, we have shown that
\[ \dim \left[\scP^d(\V)\right]^{\K} = \dim \mathbb{C}[S_{2m} \times S_{2m} \times \cdots \times S_{2m}]^{S_{2m} \times H_m \times \cdots \times H_m}\]
Define the set
\[ \tI = \{ \tau \in S_{2m} \ : \ \tau^2 = \mbox{id}, \tau(i) \neq i \mbox{ for all } i \leq n \} \]
That is, $\tI$ is the set of matchings on $2m$ letters. As an $S_{2m}$-set, we have $\tI \cong S_{2m}/H_m$. We will denote the product of $r$ copies of $\tI$ by $\tI^r$. 

Recall that given a group $G$ and $H \subset G$, we have $\dim \mathbb{C}[G]^H = \dim \mathbb{C}[G/H]$. Thus, by the previous results
\begin{align*}
\mathop{ \mathop{\sum_{\lambda^{(1)}, \ldots, \lambda^{(r)}}}_{\lambda^{(i)} \vdash 2m}}_{\lambda^{(i)} \mbox{\footnotesize even}}  g_{\lambda^{(1)}\lambda^{(2)}\cdots\lambda^{(r)}}
&= \dim \mathbb{C}[S_{2m} \times S_{2m} \times \cdots \times S_{2m}]^{S_{2m} \times H_m \times \cdots \times H_m} \\
&= \dim \mathbb{C} [S_{2m}/H_m \times \cdots \times S_{2m}/H_m ]^{S_{2m}} \\
&= \dim \mathbb{C} [\tI^r]^{S_{2m}}
\end{align*}

Given a group $G$ and a set $X$, it is well known that the dimension of the space of invariants $\mathbb{C}[X]^G$ is equal to the number of orbits of the action of $G$ on $X$. By Burnside's Lemma, the number of such orbits is the average number of $x \in X$ fixed by $g \in G$; that is,
\[ \dim \mathbb{C}[X]^G = \frac{1}{|G|} \sum_{g \in G} |X|^g \]
where $|X|^g$ denotes the cardinality of the set of points in $X$ fixed by $g$. In our setting, where $G = S_{2m}$ and $X = \tI^r$, we have
\begin{equation*} \label{burnside}
 \dim \mathbb{C} [\tI^r ]^{S_{2m}} = \frac{1}{|S_{2m}|} \sum_{g \in S_{2m}} |\tI^r|^g
\end{equation*}
where $S_{2m}$ acts on $\tI^r$ by simultaneous conjugation. That is, given $(\tau_1, \tau_2, \ldots, \tau_r) \in \tI^r$, $g \in S_{2m}$, we define the action
\[ g.(\tau_1, \tau_2, \ldots, \tau_r) = (g\tau_1g^{-1}, g\tau_2g^{-1}, \ldots, g\tau_rg^{-1}) \]
We easily see that
\[ |\tI^r|^g = \left(|\tI|^g\right)^r \]
and so, given $g \in S_{2m}$,  it remains only to find a formula for the number of matchings $\tau \in \tI$ such that $g \tau g^{-1} = \tau$. It is easily seen that this number is the same for two permutations with the same cycle type, as we can simply relabel the entries of each cycle. Thus, if $g,h \in S_{2m}$ have the same cycle type, we have $|\tI|^g = |\tI|^h$.

Recall that two elements in $S_{2m}$ are conjugate if they have the same cycle type. Denote by $\widehat{S_{2m}}$ the set of conjugacy classes in $S_{2m}$, indexed by integer partitions $\mu$ of $2m$. Hence we can define a class function $N: \widehat{S_{2m}} \to \mathbb{N}$ by setting $N(\lambda)$ equal to the number of matchings $\tau \in \tI$ that commute with a permutation $g \in S_{2m}$ with cycle type $\mu$. Finally, we have shown that for $\V = \C^{n_1} \otimes \C^{n_2} \otimes \cdots \otimes \C^{n_r}$, $\K = O({n_1}) \times O({n_2}) \times \cdots \times O({n_r})$ and $n_i \geq 2m$ for $1 \leq i \leq r$, we have
\begin{equation*} \label{new}
\dim \left[\scP^{2m}(\V)\right]^{\K} =  \dim \mathbb{C} [\tI^r ]^{S_{2m}} = \frac{1}{|S_{2m}|} \sum_{g \in S_{2m}} |\tI^r|^g = \frac{1}{(2m)!} \sum_{\lambda \vdash 2m}N(\lambda)^r
\end{equation*}
The formula for $N(\lambda)$ is presented in Theorem~\ref{Nlambda_theorem}, which we prove in the next section.

\bigskip

\section{Proof of Theorem~\ref{Nlambda_theorem}} \label{combinatorics_section}

We now determine the formula for $N(\lambda)$, the number of matchings which commute with a permutation with cycle type $\lambda$. Our strategy will be to focus initially on permutations with shape $(m^2)$. We then consider ``brick'' permutations with shape $(a^b)$, where $ab = 2m$, and finally we generalize to all permutations.

\subsection{The Case $\lambda = (m^2)$}

We begin with:

\begin{lemma} \label{brick_matchings}
Let $g = (\alpha_1 \ \alpha_2 \cdots \ \alpha_m)(\beta_1 \ \beta_2 \ \cdots \beta_m)$ be a product of two cycles of equal length $m$. If $\sigma$ is a matching that commutes with $g$, then either $\sigma = g^{m/2}$ or $\sigma$ has form
\[ \sigma = (\alpha_1 \ \beta_{j})(\alpha_2 \ \beta_{j+1})(\alpha_3 \ \beta_{j+2}) \cdots (\alpha_m  \ \beta_{j-1}) \]
where $0 \leq i < m$. Hence, if $m$ is even there are exactly $m+1$ matchings that commute with $g$, and $m$ such matchings if $m$ is odd.

\end{lemma}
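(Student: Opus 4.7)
The plan is to analyze how $\sigma$ acts on the cycle decomposition of $g$. Since conjugation by $\sigma$ preserves the cycle structure of $g$, and $g$ consists of two $m$-cycles $C_\alpha = (\alpha_1 \cdots \alpha_m)$ and $C_\beta = (\beta_1 \cdots \beta_m)$, the involution $\sigma$ must either preserve each of the supports $A = \{\alpha_1, \ldots, \alpha_m\}$ and $B = \{\beta_1, \ldots, \beta_m\}$ setwise, or swap them. This dichotomy drives the proof.

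In the first case, $\sigma$ restricted to $A$ lies in the centralizer of $C_\alpha$ in the symmetric group on $A$, which is the cyclic group $\langle C_\alpha \rangle$ of order $m$. For $\sigma|_A$ to be a fixed-point-free involution, it must be the unique power of $C_\alpha$ whose square is the identity and which has no fixed points, namely $C_\alpha^{m/2}$, which exists only when $m$ is even. The same analysis on $B$ forces $\sigma|_B = C_\beta^{m/2}$, so altogether $\sigma = g^{m/2}$. This contributes exactly one matching when $m$ is even and none when $m$ is odd.

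In the second case, $\sigma$ interchanges $A$ and $B$. Once we choose $\sigma(\alpha_1) = \beta_j$ for some $j \in \{1, \ldots, m\}$, the relation $\sigma g = g \sigma$ applied inductively yields
\[ \sigma(\alpha_k) = \sigma(g^{k-1}(\alpha_1)) = g^{k-1}(\sigma(\alpha_1)) = \beta_{j+k-1} \]
with subscripts reduced modulo $m$, so $\sigma$ is determined by the single choice of $j$. Conversely, any such map is automatically a fixed-point-free involution swapping $A$ and $B$, so the $m$ choices of $j$ yield exactly $m$ distinct matchings of the stated form. Summing the two cases produces the claimed totals, $m+1$ when $m$ is even and $m$ when $m$ is odd. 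The only subtlety, easily handled, is confirming these matchings are pairwise distinct: distinct $j$ give distinct images of $\alpha_1$, and the case-one matching is trivially separated from case two because it maps $A$ to $A$ rather than to $B$. I expect no serious obstacle; the main idea is simply to recognize that the centralizer of an $m$-cycle is cyclic and to propagate the commutation relation along a cycle.
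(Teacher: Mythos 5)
Your proof is correct and follows the same overall structure as the paper's: split into the case where $\sigma$ preserves the two cycle supports and the case where it swaps them, then propagate the commutation relation along the cycles. The one genuine divergence is in the first case, where the paper derives $\sigma = g^{m/2}$ by an explicit hand computation (assuming $(\alpha_k\ \alpha_m)$ appears in $\sigma$ and chasing the relation until it forces $m = 2k$), whereas you invoke the structural fact that the centralizer of an $m$-cycle in the symmetric group on its support is the cyclic group it generates, so the only fixed-point-free involution available is the $(m/2)$-th power; your route is cleaner and also makes explicit the dichotomy (that conjugation permutes the cycles of $g$, so $\sigma$ must fix or swap the two supports) which the paper leaves implicit. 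One small point, shared with the paper's own proof: to get the exact counts $m+1$ and $m$ you also need the converse, that each permutation $(\alpha_1\ \beta_j)(\alpha_2\ \beta_{j+1})\cdots(\alpha_m\ \beta_{j-1})$ really does commute with $g$; you assert these are fixed-point-free involutions but do not check commutation, though the verification is a one-line computation.
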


\begin{proof}
Suppose first that $\sigma$ permutes the elements within each cycle of $g$. Let $\sigma(\alpha_k) = \alpha_m$ for some $k$ with $1 \leq k \leq m$; that is, assume the transposition $(\alpha_k \ \alpha_m)$ appears in $\sigma$. Then
\[ \alpha_{k+1} = g(\alpha_{k}) = \sigma g \sigma(\alpha_{k}) = \sigma g(\alpha_m) = \sigma(\alpha_1) \]
and so $(\alpha_{1} \ \alpha_{k+1})$ is a transposition in $\sigma$. Similarly,
\[ \alpha_{k+2} = g(\alpha_{k+1}) = \sigma g \sigma(\alpha_{k+1}) = \sigma g(\alpha_1) = \sigma(\alpha_2) \]
and so $(\alpha_{2} \ \alpha_{k+2})$ is a transposition in $\sigma$. Continuing in this way, we find $\sigma$ contains the transpositions
$(\alpha_{1} \ \alpha_{k+1}), (\alpha_{2} \ \alpha_{k+2}), (\alpha_{3} \ \alpha_{k+3}), \ldots, (\alpha_k \ \alpha_m) $.
Now
\[ \sigma(\alpha_m) \alpha_k = g(\alpha_{k-1}) = \sigma g \sigma(\alpha_{k-1}) = \sigma g(\alpha_{2k-1}) = \sigma(\alpha_{2k}) \]
and hence $m = 2k$. Then we have only one possible choice for $k$, $k = m/2$. Thus
\[ \sigma = (\alpha_{1} \ \alpha_{\frac{m}{2}+1})(\alpha_{2} \ \alpha_{\frac{m}{2}+2})(\alpha_{3} \ \alpha_{\frac{m}{2}+3}) \cdots (\alpha_{\frac{m}{2}} \ \alpha_m) = (\alpha_1 \ \alpha_2 \cdots \ \alpha_m)^{m/2} \]
Thus if $\sigma$ permutes the elements within the cycles of $g$, we must have $\sigma = g^{m/2}$. Note such a $\sigma$ only exists if $m$ is even.

Now suppose $\sigma$ interchanges elements between the two cycles of $g$. Assume $\sigma(\alpha_1) = \beta_j$ for some $j$ with $1 \leq j \leq m$. Then $(\alpha_1 \ \beta_j)$ is a transposition appearing in $\sigma$. Since $\sigma g \sigma = g$, we have
\[ \alpha_2 = g(\alpha_1) = \sigma g \sigma (\alpha_1) = \sigma g(\beta_j) = \sigma(\beta_{j+1}) \]
and so $(\alpha_2 \ \beta_{j+1})$ appears in $\sigma$. Similarly,
\[ \alpha_3 = g(\alpha_2) = \sigma g \sigma (\alpha_2) = \sigma g(\beta_{j+1}) = \sigma(\beta_{j+2}) \]
and so $(\alpha_3 \ \beta_{j+2})$ appears in $\sigma$. Continuing in this way, we have
\[ \sigma = (\alpha_1 \ \beta_j)(\alpha_2 \ \beta_{j+1})(\alpha_3 \ \beta_{j+2}) \cdots (\alpha_m \ \beta_{j-1}) \]
\end{proof}

The matchings that will commute with a fixed permutation with two cycles of equal length can easily be interpreted as diagrams. With $g$ as in the above proof, we draw two rows of $m$ nodes. We label the nodes along the top row with $\alpha_1, \alpha_2, \ldots, \alpha_m$, and along the bottom row with $\beta_1, \beta_2, \ldots, \beta_m$. Drawing edges so that each node is connected to exactly one other node defines a matching. When we consider only the diagrams corresponding to matchings which commute with $g$, an immediate pattern emerges. If $m$ is even, the first such diagram is obtained by drawing an edge from $\alpha_1$ to $\alpha_{\frac{m}{2} + 1}$, an edge from $\alpha_2$ to $\alpha_{\frac{m}{2} + 2}$, and so on. The nodes labelled $\beta_i$ are matched in an identical way, resulting in a diagram which represents the matching $g^{m/2}$. The remaining $m$ diagrams (for odd or even values of $m$) are obtained by drawing edges from $\alpha_i$ to $\beta_i$ for each $i$, and then cyclically permuting the second row of unlabeled nodes. The five matchings that commute with $g = (\alpha_1 \ \alpha_2 \ \alpha_3 \ \alpha_4)(\beta_1 \ \beta_2 \ \beta_3 \ \beta_4)$, for instance, are:

\bigskip

\begin{center}
\begin{tikzpicture}
\node[below] at (0,1) {$\alpha_1$};   \fill (0,1) circle (2pt);
\node[below] at (1,1) {$\alpha_2$};   \fill (1,1) circle (2pt);
\node[below] at (2,1) {$\alpha_3$};   \fill (2,1) circle (2pt);
\node[below] at (3,1) {$\alpha_4$};   \fill (3,1) circle (2pt);

\node[below] at (0,0) {$\beta_1$};  \fill (0,0) circle (2pt);
\node[below] at (1,0) {$\beta_2$};   \fill  (1,0) circle (2pt);
\node[below] at (2,0) {$\beta_3$};   \fill  (2,0) circle (2pt);
\node[below] at (3,0) {$\beta_4$};   \fill (3,0) circle (2pt);

\draw(0,1) edge[bend left] (2,1);
\draw(1,1) edge[bend left] (3,1);
\draw(0,0) edge[bend left] (2,0);
\draw(1,0) edge[bend left] (3,0);
\end{tikzpicture}
\hspace{1cm}
\begin{tikzpicture}
\node[above] at (0,1) {$\alpha_1$};   \fill (0,1) circle (2pt);
\node[above] at (1,1) {$\alpha_2$};   \fill (1,1) circle (2pt);
\node[above] at (2,1) {$\alpha_3$};   \fill (2,1) circle (2pt);
\node[above] at (3,1) {$\alpha_4$};   \fill (3,1) circle (2pt);

\node[below] at (0,0) {$\beta_1$};  \fill (0,0) circle (2pt);
\node[below] at (1,0) {$\beta_2$};   \fill  (1,0) circle (2pt);
\node[below] at (2,0) {$\beta_3$};   \fill  (2,0) circle (2pt);
\node[below] at (3,0) {$\beta_4$};   \fill (3,0) circle (2pt);

\draw(0,1) edge (0,0);
\draw(1,1) edge (1,0);
\draw(2,1) edge (2,0);
\draw(3,1) edge (3,0);
\end{tikzpicture}
\hspace{1cm}
\begin{tikzpicture}
\node[above] at (0,1) {$\alpha_1$};   \fill (0,1) circle (2pt);
\node[above] at (1,1) {$\alpha_2$};   \fill (1,1) circle (2pt);
\node[above] at (2,1) {$\alpha_3$};   \fill (2,1) circle (2pt);
\node[above] at (3,1) {$\alpha_4$};   \fill (3,1) circle (2pt);

\node[below] at (0,0) {$\beta_1$};  \fill (0,0) circle (2pt);
\node[below] at (1,0) {$\beta_2$};   \fill  (1,0) circle (2pt);
\node[below] at (2,0) {$\beta_3$};   \fill  (2,0) circle (2pt);
\node[below] at (3,0) {$\beta_4$};   \fill (3,0) circle (2pt);

\draw(0,1) edge (1,0);
\draw(1,1) edge (2,0);
\draw(2,1) edge (3,0);
\draw(3,1) edge (0,0);
\end{tikzpicture}

\vspace{0.5cm}

\begin{tikzpicture}
\node[above] at (0,1) {$\alpha_1$};   \fill (0,1) circle (2pt);
\node[above] at (1,1) {$\alpha_2$};   \fill (1,1) circle (2pt);
\node[above] at (2,1) {$\alpha_3$};   \fill (2,1) circle (2pt);
\node[above] at (3,1) {$\alpha_4$};   \fill (3,1) circle (2pt);

\node[below] at (0,0) {$\beta_1$};  \fill (0,0) circle (2pt);
\node[below] at (1,0) {$\beta_2$};   \fill  (1,0) circle (2pt);
\node[below] at (2,0) {$\beta_3$};   \fill  (2,0) circle (2pt);
\node[below] at (3,0) {$\beta_4$};   \fill (3,0) circle (2pt);

\draw(0,1) edge (2,0);
\draw(1,1) edge (3,0);
\draw(2,1) edge (0,0);
\draw(3,1) edge (1,0);
\end{tikzpicture}
\hspace{1cm}
\begin{tikzpicture}
\node[above] at (0,1) {$\alpha_1$};   \fill (0,1) circle (2pt);
\node[above] at (1,1) {$\alpha_2$};   \fill (1,1) circle (2pt);
\node[above] at (2,1) {$\alpha_3$};   \fill (2,1) circle (2pt);
\node[above] at (3,1) {$\alpha_4$};   \fill (3,1) circle (2pt);

\node[below] at (0,0) {$\beta_1$};  \fill (0,0) circle (2pt);
\node[below] at (1,0) {$\beta_2$};   \fill  (1,0) circle (2pt);
\node[below] at (2,0) {$\beta_3$};   \fill  (2,0) circle (2pt);
\node[below] at (3,0) {$\beta_4$};   \fill (3,0) circle (2pt);

\draw(0,1) edge (3,0);
\draw(1,1) edge (0,0);
\draw(2,1) edge (1,0);
\draw(3,1) edge (2,0);
\end{tikzpicture}

\end{center}

\bigskip

\subsection{The Case $\lambda = (a^b)$} \label{formulas} Suppose now that our permutation $g$ has $b$ cycles of length $a$, where $ab = 2m$. Recall that we can conjugate $g$ by a permutation $\sigma$ by applying $\sigma$ to each symbol of each cycle of $g$. That is, if $g = (\alpha_1 \ \alpha_2 \ \cdots \ \alpha_a)(\beta_1 \ \beta_2 \ \cdots \ \beta_a) \cdots$, we have
\[ \sigma g \sigma^{-1} = (\sigma(\alpha_1) \ \sigma(\alpha_2) \ \cdots \ \sigma(\alpha_a))(\sigma(\beta_1) \ \sigma(\beta_2) \ \cdots \ \sigma(\beta_a))\cdots \]
It follows that if $\sigma$ commutes with $g$, conjugation by $\sigma$ sends one cycle of $g$ to another cycle of $g$. Thus we can take pairs of cycles of $g$ and look for the matchings on $2a$ numbers that commute with the product of these pairs following Lemma~\ref{brick_matchings}. If we have an odd number of cycles, the unpaired cycle will commute with a power of itself, that power being $a/2$. The product of these will be a matching which commutes with $g$ itself. 

To illustrate, we'll quickly compute a matching which commutes with the permutation $g = (1 \ 2 \ 3 \ 4)(5 \ 6 \ 7 \ 8)(9 \ 10 \ 11 \ 12)$. Suppose we first pair the first and third cycle, and look for matchings which commute with $(1 \ 2 \ 3 \ 4)(9 \ 10 \ 11 \ 12)$. By Lemma~\ref{brick_matchings}, we have five to choose from, such as $(1 \ 10)(2 \ 11)(3 \ 12)(4 \ 9)$. We still have one cycle of $g$ left, and there is only one matching to commute with it: $(5 \ 7)(6 \ 8)$. Hence, a matching that commutes with $g$ is $(1 \ 10)(2 \ 11)(3 \ 12)(4 \ 9)(5 \ 7)(6 \ 8)$.  

Once again, we turn to diagrams to simplify. After fixing a cycle expression for $g$, we draw a diagram with $b$ rows of $a$ nodes, and label the nodes of the $i$th row with the entries of the $i$th cycle of $g$, as before. Again, we can define a matching by drawing an edge between pairs of nodes. The following diagrams illustrate the case where $g$ has cycle type $\lambda = (4^5)$. The diagram on the left displays a matching, but not a matching that will commute with the permutation $g$, as it fails to map one cycle of $g$ to another. The center diagram meets this requirement, but does not satisfy Lemma~\ref{brick_matchings}. Finally, the diagram on the right corresponds to a matching that will commute with $g$. For simplicity, we have suppressed labeling of the nodes. 

\bigskip

\begin{center}
\begin{tikzpicture}
\fill (0,0) circle (2pt);
\fill (1,0) circle (2pt);
\fill (2,0) circle (2pt);
 \fill (3,0) circle (2pt);

\fill (0,1) circle (2pt);
\fill (1,1) circle (2pt);
\fill (2,1) circle (2pt);
\fill (3,1) circle (2pt);

\fill (0,2) circle (2pt);
\fill (1,2) circle (2pt);
\fill (2,2) circle (2pt);
\fill (3,2) circle (2pt);

\fill (0,3) circle (2pt);
\fill (1,3) circle (2pt);
\fill (2,3) circle (2pt);
\fill (3,3) circle (2pt);

\fill (0,4) circle (2pt);
\fill (1,4) circle (2pt);
\fill (2,4) circle (2pt);
\fill (3,4) circle (2pt);

\draw(0,4) edge (1,3);
\draw(1,4) edge (2,2);
\draw(2,4) edge (3,0);
\draw(3,4) edge[bend left] (0,1);
\draw(0,3) edge[bend right] (2,3);
\draw(2,1) edge (0,2);
\draw(3,2) edge (0,0);
\draw(1,0) edge (3,3);
\draw(3,1) edge (2,0);
\draw(1,1) edge (1,2);

\end{tikzpicture}
\hspace{1.5cm}
\begin{tikzpicture}
\fill (0,0) circle (2pt);
\fill (1,0) circle (2pt);
\fill (2,0) circle (2pt);
 \fill (3,0) circle (2pt);

\fill (0,1) circle (2pt);
\fill (1,1) circle (2pt);
\fill (2,1) circle (2pt);
\fill (3,1) circle (2pt);

\fill (0,2) circle (2pt);
\fill (1,2) circle (2pt);
\fill (2,2) circle (2pt);
\fill (3,2) circle (2pt);

\fill (0,3) circle (2pt);
\fill (1,3) circle (2pt);
\fill (2,3) circle (2pt);
\fill (3,3) circle (2pt);

\fill (0,4) circle (2pt);
\fill (1,4) circle (2pt);
\fill (2,4) circle (2pt);
\fill (3,4) circle (2pt);

\draw(0,4) edge (2,2);
\draw(1,4) edge (3,2);
\draw(2,4) edge (1,2);
\draw(3,4) edge (0,2);

\draw(0,3) edge[bend left] (2,3);
\draw(1,3) edge[bend left] (3,3);

\draw(0,1) edge (3,0);
\draw(1,1) edge (1,0);
\draw(2,1) edge (0,0);
\draw(3,1) edge (2,0);
\end{tikzpicture}
\hspace{1.5cm}
\begin{tikzpicture}
\fill (0,0) circle (2pt);
\fill (1,0) circle (2pt);
\fill (2,0) circle (2pt);
\fill (3,0) circle (2pt);

\fill (0,1) circle (2pt);
\fill (1,1) circle (2pt);
\fill (2,1) circle (2pt);
\fill (3,1) circle (2pt);

\fill (0,2) circle (2pt);
\fill (1,2) circle (2pt);
\fill (2,2) circle (2pt);
\fill (3,2) circle (2pt);

\fill (0,3) circle (2pt);
\fill (1,3) circle (2pt);
\fill (2,3) circle (2pt);
\fill (3,3) circle (2pt);

\fill (0,4) circle (2pt);
\fill (1,4) circle (2pt);
\fill (2,4) circle (2pt);
\fill (3,4) circle (2pt);

\draw(0,4) edge (1,2);
\draw(1,4) edge (2,2);
\draw(2,4) edge (3,2);
\draw(3,4) edge (0,2);

\draw(0,3) edge[bend left] (2,3);
\draw(1,3) edge[bend left] (3,3);

\draw(0,1) edge (3,0);
\draw(1,1) edge (0,0);
\draw(2,1) edge (1,0);
\draw(3,1) edge (2,0);
\end{tikzpicture}
\end{center}

\bigskip

We have now established a convenient way of establishing which matchings commute with a permutation of shape $(a^b)$, it is simply a matter of counting them. To do this, we will compress the diagrams which correspond to an eligible matching as follows: for each cycle of the permutation, draw a single node. By Proposition~\ref{brick_matchings}, there are $a$ matchings that will commute with two distinct cycles. Assign a color to each of these choices, and color the edge between the corresponding nodes accordingly. If matching contains the $a/2$-th power of a cycle, the corresponding node is left isolated. Thus $N((a^b))$ is obtained by counting graphs of the form
\begin{center}
\begin{tikzpicture}
\draw(0,0) edge[bend left, red] (6,0);
\draw(4,0) edge[bend left, blue] (12,0);
\draw(8,0) edge[bend left, green] (10,0);
\fill (0,0) circle (2pt);
\fill (2,0) circle (2pt);
\fill (4,0) circle (2pt);
\fill (6,0) circle (2pt);
\fill (8,0) circle (2pt);
\fill (10,0) circle (2pt);
\fill (12,0) circle (2pt);
\end{tikzpicture}
\end{center}

\bigskip

We consider three sub cases, depending on the parity of $a$ and $b$ (recall we cannot allow both $a$ and $b$ to be odd, as we require $ab$ to be even):

\subsubsection{$a$ odd, $b$ even} If $a$ is odd, any matching that commutes with $g$ cannot contain a power of a cycle of $g$, so we have no isolated nodes. The number of uncolored diagrams in this case is equal to the number of matchings on $b$ letters, and each of the $b/2$ edges of the diagram can be colored in one of $a$ ways. Hence we have
\[ N((a^b)) = \frac{b!a^{b/2}}{2^{b/2}\left(\frac{b}{2}\right)!} \]

\subsubsection{$a$ even, $b$ even} If $a$ and $b$ are even, we can allow an even number of isolated nodes. If our diagram contains $i$ isolated nodes, the number of uncolored diagrams is equal to the number of permutations on $b$ with shape $(1^i2^{b-i})$, and each of the $(b-i)/2$ edges of the diagram can be colored in one of $a$ ways. Hence we have
\[ N((a^b)) = \sum_{i=0, i \mbox{ \tiny even}}^b \frac{b!a^{(b-i)/2}}{i!(\frac{b-i}{2})! 2^{(b-i)/2}}  \]

\subsubsection{$a$ even, $b$ odd} Similarly, if $b$ is odd we can allow an odd number of isolated nodes, and so
\[ N((a^b)) = \sum_{i=1, i \mbox{ \tiny odd}}^b \frac{b!a^{(b-i)/2}}{i!(\frac{b-i}{2})! 2^{(b-i)/2}}  \]

\bigskip

\subsection{The General Case $\lambda = (1^{b_1}2^{b_2} \cdots t^{b_t})$}

\begin{lemma}\label{length_matchings}
Let $g$ be a permutation and and let $\sigma$ be a matching such that $\sigma g \sigma = g$. If $(\alpha \ \beta)$ appears in $\sigma$, then $\alpha, \beta \in \{1, 2, \ldots, n\}$ appear in cycles of the same length in $g$. 
\end{lemma}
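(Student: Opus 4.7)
The plan is to exploit the fact that a matching $\sigma$ satisfies $\sigma^2 = \mathrm{id}$, so $\sigma = \sigma^{-1}$, and therefore the hypothesis $\sigma g \sigma = g$ can be rewritten as $\sigma g \sigma^{-1} = g$. This says precisely that $g$ is fixed by conjugation by $\sigma$, and the entire lemma will fall out of what conjugation by $\sigma$ does to the cycles of $g$.

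First I would recall the standard fact that if $C = (c_1\ c_2\ \cdots\ c_k)$ is a cycle of a permutation $g$, then
\[ \sigma C \sigma^{-1} = (\sigma(c_1)\ \sigma(c_2)\ \cdots\ \sigma(c_k)) \]
is a cycle of $\sigma g \sigma^{-1}$ of the same length $k$. Consequently, conjugation by $\sigma$ induces a length-preserving permutation on the set of cycles of any permutation it fixes. Now suppose $(\alpha\ \beta)$ appears in $\sigma$, so $\sigma(\alpha) = \beta$. Let $C_\alpha$ be the cycle of $g$ containing $\alpha$, and let $k$ be its length. By the observation above, $\sigma C_\alpha \sigma^{-1}$ is a cycle of $\sigma g \sigma^{-1} = g$, it contains $\sigma(\alpha) = \beta$, and it has length $k$. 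Since each element of $\{1,2,\ldots,n\}$ lies in exactly one cycle of $g$, this cycle must coincide with the cycle $C_\beta$ of $g$ containing $\beta$. Hence $C_\beta$ also has length $k$, as desired.

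There is essentially no obstacle here: the argument is a direct application of how conjugation acts on cycle decomposition. The only point that requires a moment's thought is that we do not need to separate the case in which $\alpha$ and $\beta$ already lie in a common cycle of $g$: in that situation $C_\alpha = C_\beta$ and the conclusion is trivial, while in the remaining case the argument above identifies $C_\alpha$ and $C_\beta$ as distinct cycles of equal length. Either way, the transposition $(\alpha\ \beta) \in \sigma$ is forced to connect two entries sitting in cycles of $g$ of the same size.
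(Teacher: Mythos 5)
Your proof is correct. It is worth noting how it relates to the paper's own argument: the paper proves the lemma ``by hand,'' chasing elements through the identity $\sigma g \sigma = g$ to show that $\sigma$ pairs $\alpha_i$ with $\beta_i$ term by term, and then deriving a contradiction from the assumption $k < l$. You instead observe that $\sigma = \sigma^{-1}$ turns the hypothesis into $\sigma g \sigma^{-1} = g$ and invoke the standard formula $\sigma (c_1\ \cdots\ c_k) \sigma^{-1} = (\sigma(c_1)\ \cdots\ \sigma(c_k))$, so that conjugation by $\sigma$ permutes the cycles of $g$ while preserving their lengths, and the cycle containing $\alpha$ is carried to the (unique) cycle containing $\beta = \sigma(\alpha)$. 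The underlying mechanism is identical --- the paper's element chase is in effect a re-derivation of that conjugation formula in this special case --- but your packaging is shorter, avoids the proof by contradiction, and handles fixed points of $g$ uniformly. It is also consonant with the paper itself, which states exactly this conjugation fact at the start of its discussion of the case $\lambda = (a^b)$; your version simply deploys it one lemma earlier.
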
 

\begin{proof}
Suppose $\alpha$ appears in the cycle $(\alpha \ \alpha_1 \ \alpha_2 \ \cdots \ \alpha_k)$ and $\beta$ appears in the cycle $(\beta \ \beta_1 \ \beta_2 \ \cdots \ \beta_l)$. Assume without loss of generality that $k \leq l$. We have
\[ \alpha_1 = g(\alpha) = \sigma g \sigma (\alpha) = \sigma g(\beta) = \sigma(\beta_1) \]
and so $(\alpha_1 \ \beta_1)$ is a transposition appearing in $\sigma$. Similarly,
\[ \alpha_2 = g(\alpha_1) = \sigma g \sigma (\alpha_1) = \sigma g(\beta_1) = \sigma(\beta_2) \]
and so  $(\alpha_2 \ \beta_2)$ is also a transposition appearing in $\sigma$. Continuing in this way, we find that $\sigma$ must contain the transpositions
\[ (\alpha \ \beta), (\alpha_1 \ \beta_1), (\alpha_2 \ \beta_2), \ldots, (\alpha_k \ \beta_k) \]
Suppose for contradiction that $k<l$. Then there must exist some $\gamma \in \{1, 2, \ldots, n\}$ such that $\gamma \notin \{\alpha, \alpha_1, \ldots , \alpha_k\}$ and $(\gamma \ \beta_l)$ is a transposition appearing in $\sigma$. Then
\[ \beta = g(\beta_l) = \sigma g \sigma(\beta_l) = \sigma g(\gamma) \]
By hypothesis $\beta = \sigma (\alpha)$, so $\alpha = g(\gamma)$. But $\alpha = g(\alpha_k)$, a contradiction, since $\gamma \neq \alpha_k$. Hence $k = l$.
\end{proof}

Thus, if $g$ has $b_i$ cycles of length $i$, we need only count the number of matchings which commute with the product of these $b_i$ cycles, which we can calculate using the formulas already determined. To find the total number of matchings which commute with $g$, we simply take the product. That is,
\[ N(\lambda) = N((1^{b_1}))N((2^{b_2})) \cdots N((t^{b_t})) \]

As an example, consider the permutation 
\[ g = (1 \ 14 \ 7 \ 11)(5 \ 16 \ 12)(6 \ 20 \ 9)(8 \ 17 \ 13)(10 \ 2 \ 15 \ 4)(18 \ 3 \ 19)\]
We note that $g$ has two cycles of length four, and by Lemma~\ref{brick_matchings}, we have
\[ N((4^2)) = 4 + 1 = 5 \]
In addition, $g$ has four cycles of length three, and
\[ N((3^4)) = \frac{4!3^{4/2}}{2^{4/2}(4/2)!} = 27 \]
So the number of matchings that will commute with $g$ is $5 \cdot 27 = 135$. 

\subsection{Data}

Table 1 displays dimension of $\scP^{2m}(\V)^{\K}$ for several values of $r$ and $m$ generated by the formulas of Theorems~\ref{main_theorem} and~\ref{Nlambda_theorem}.

\bigskip

\begin{table}[H]\label{stabletable}
\caption{{\tiny $\dim \scP^{2m}(\V)^{\K}$ where $\K = \prod_{i=1}^r O({n_i})$, $\V = \prod_{i=1}^r \C^{n_i}$, and $n_i \geq 2m$ for $1 \leq i \leq r$}}
\begin{tabular}{c||llllll}
 $r \backslash m$&1 & 2& 3 & 4 & 5 & 6 \\
\hline\hline
1 & 1 & 1 & 1 & 1 & 1 & 1 \\
2 & 1 & 2 & 3 & 5 & 7 & 11 \\
3 & 1 & 5 & 16 & 86 & 448 & 3580 \\
4 & 1 & 14 & 132 & 4154 & 234004 & 24791668 \\
5 & 1 & 41 & 1439 & 343101 & 208796298 & 253588562985 \\
6 & 1 & 122 & 18373 & 33884745 & 196350215004 &
2634094790963313 \\
7 & 1 & 365 & 254766 & 3505881766 & 185471778824111 &
27380169200102651288 \\
8 & 1 & 1094 & 3680582 & 366831842914 & 175264150734326927
& 284615877731708760168866 
\end{tabular}
\end{table}
 
\bigskip

\section{The Invariants and a Graph Interpretation}\label{graph_section}

We now seek a description of the invariants themselves. We begin by setting up notation for an arbitrary tensor in $\V$. Let $e_{i} \in \C^n$ denote the vector with a 1 in row $i$ and 0 elsewhere. Note that $(e_1, e_2, \ldots, e_n)$ is an ordered basis for $\C^n$. An arbitrary tensor in $\V$ is of the form
\[ \sum x_{i_1i_2\cdots i_r}e_{i_1}\otimes e_{i_2}\otimes \cdots \otimes e_{i_r} \]
where $i_j$ ranges from $1$ to $n_j$ and $x_{i_1i_2\cdots i_r}$ is a complex scalar. 

Recall that $\tI^r$ denotes the set of $r$-tuples of matchings on $2m$ letters. The group $S_{2m}$ acts on this set by simultaneous conjugation. We choose a representative $(\tau_1, \tau_2, \ldots, \tau_r)$ from each orbit under this action, which we denote by
\[ [\tau_1, \tau_2, \ldots, \tau_r] = \{\sigma(\tau_1, \tau_2, \ldots, \tau_r)\sigma^{-1} \ | \ \sigma \in S_{2m} \} \]
Fix an ordering on the cycles of each $\tau_i$, and let $j_i^k$ denote the cycle containing $k$ in $\tau_i$. For instance, if $\tau_2 = (1 \ 3)(2 \ 4)$, then $j_2^4 = 2$, since 4 appears in the second cycle of $\tau_2$. The invariant associated to $[\tau_1, \tau_2, \ldots, \tau_r]$ can now written as the sum of terms of the form
\[ x_{a^{(1)}_{j_1^1} a^{(2)}_{j_2^1} \cdots a^{(r )}_{j_r^1}}x_{a^{(1)}_{j_1^2} a^{(2)}_{j_2^2} \cdots a^{(r )}_{j_r^2}} \cdots x_{a^{(1)}_{j_1^{2m}} a^{(2)}_{j_2^{2m}} \cdots a^{(r )}_{j_r^{2m}}}   \]
where each $a_i^{(t)}$ ranges from 1 to $n_k$.

A $k$-regular graph on $n$ vertices is a graph in which each of the $n$ vertices has degree $k$; that is, each vertex is met by exactly $k$ edges. An edge coloring of a graph is a coloring of these edges so that no two adjacent edges are the same color (this is also referred to as a 1-factorization of the graph). In this case of a $k$-regular graph, an edge coloring implies that each vertex is met by $k$ edges, which are each a distinct color. 

We now present a bijection between the orbits of the action of $S_{2m}$ on $\tI^r$ and the isomorphism classes of edge colored $r$-regular graphs with $2m$ vertices. To construct the graph associated to $[\tau_1, \tau_2, \ldots, \tau_r]$, number the vertices of the graph from 1 to $2m$. An edge of color $i$ is drawn between vertices $j$ and $k$ if $\tau_i$ contains the cycle $(j \ k)$. Repeating this process for all $r$ matchings, we obtain an undirected graph with $2m$ vertices and $r$ colors. 

To illustrate, we consider the case $r = 3$, $m=2$ where we have chosen the 3-tuple of matchings
\[ (\tau_1, \tau_2, \tau_3) = ((1 \ 2)(3 \ 4), (1 \ 3)(2 \ 4), (1 \ 4)(2 \ 3)) \]
The invariant associated to $[\tau_1,\tau_2,\tau_3]$ is then
\[ \sum_{a_1^{(1)}, a_2^{(1)}}^{n_1} \sum_{a_1^{(2)},a_2^{(2)}}^{n_2} \sum_{a_1^{(3)},a_2^{(3)}}^{n_3} x_{a_1^{(1)}a_1^{(2)}a_1^{(3)}}x_{a_1^{(1)}a_2^{(2)}a_2^{(3)}}x_{a_2^{(1)}a_1^{(2)}a_2^{(3)}}x_{a_2^{(1)}a_2^{(2)}a_1^{(3)}} \]
To construct the graph associated to $[\tau_1,\tau_2,\tau_3]$, we encode $\tau_1, \tau_2, \tau_3$ with the colors black, red, and blue, respectively. Forgetting the labels of the vertices leaves us with a representative of the isomorphism class containing the graph.

\begin{center}
\begin{tikzpicture}
      	\node (1) at (0,0) [circle,draw, thick,inner sep=2pt] {1};
      	\node (2) at (1,0) [circle,draw, thick,inner sep=2pt] {2};
	\node (3) at (2,0) [circle,draw, thick,inner sep=2pt] {3};
	\node (4) at (3,0) [circle,draw, thick,inner sep=2pt] {4};
	\draw[thick, bend left] (1) to (2);
	\draw[thick, bend left] (3) to (4);
	\node (5) at (0,-1) [circle,draw, thick,inner sep=2pt] {1};
      	\node (6) at (1,-1) [circle,draw, thick,inner sep=2pt] {2};
	\node (7) at (2,-1) [circle,draw, thick,inner sep=2pt] {3};
	\node (8) at (3,-1) [circle,draw, thick,inner sep=2pt] {4};
	\draw[thick, bend left,red] (5) to (7);
	\draw[thick, bend left,red] (6) to (8);
	\node (9) at (0,-2) [circle,draw, thick,inner sep=2pt] {1};
      	\node (10) at (1,-2) [circle,draw, thick,inner sep=2pt] {2};
	\node (11) at (2,-2) [circle,draw, thick,inner sep=2pt] {3};
	\node (12) at (3,-2) [circle,draw, thick,inner sep=2pt] {4};
	\draw[thick, bend left,blue] (10) to (11);
	\draw[thick, bend left,blue] (9) to (12);
	\node (t12) at (-1,0) {$\tau_1:$};
	\node (t22) at (-1,-1) {$\tau_2:$};
	\node (t32) at (-1,-2) {$\tau_3:$};
\end{tikzpicture}
\hspace{1.2cm}
\begin{tikzpicture}
      	\node (1) at (0,2) [circle,draw, thick,inner sep=2pt] {1};
      	\node (2) at (2,2) [circle,draw, thick,inner sep=2pt] {2};
	\node (3) at (2,0) [circle,draw, thick,inner sep=2pt] {3};
	\node (4) at (0,0) [circle,draw, thick,inner sep=2pt] {4};
	\draw[thick] (1) to (2); \draw[thick] (3) to (4);
	\draw[thick, red] (1) to (3); \draw[thick, red] (2) to (4);
	\draw[thick, blue] (1) to (4); \draw[thick, blue] (2) to (3);
	\node (t12) at (1,3.5) {Graph $\mathcal{G}$ associated};
	\node (t22) at (1,3) {to $(\tau_1, \tau_2, \tau_3)$};
\end{tikzpicture}
\hspace{1cm}
\begin{tikzpicture}
      	\node (1) at (0,2) [fill,circle,draw, thick,inner sep=2pt] {};
      	\node (2) at (2,2) [fill,circle,draw, thick,inner sep=2pt] {};
	\node (3) at (2,0) [fill,circle,draw, thick,inner sep=2pt] {};
	\node (4) at (0,0) [fill,circle,draw, thick,inner sep=2pt] {};
	\draw[thick] (1) to (2); \draw[thick] (3) to (4);
	\draw[thick, red] (1) to (3); \draw[thick, red] (2) to (4);
	\draw[thick, blue] (1) to (4); \draw[thick, blue] (2) to (3);
	\node (t12) at (1,3.5) {Representative of };
	\node (t22) at (1,3) {isomorphism class of $\mathcal{G}$};
\end{tikzpicture}
\end{center}
\bigskip
Note that any graph isomorphic to this graph will correspond to a 3-tuple of matchings that belongs to the same orbit as our original choice. 

A list of representatives of all isomorphism classes of 3-regular graphs on four vertices is shown in Table 2, along with the corresponding invariant. We have chosen a representative $(\tau_1, \tau_2,\tau_3)$ of each orbit so that $\tau_1 = (1 \ 2)(3 \ 4)$.

Finally, we show a way of encoding these invariants with forests of phylogenetic trees. As explored in the introduction, Diaconis and Holmes \cite{Diaconis} provide a bijection between matchings on $2m$ letters and phylogenetic trees with $m+1$ labelled leaves. When given an $r$-tuple of matchings, we can define a forest of $r$ trees with roots labelled $1, 2, \ldots, r$. Note that we consider two forests to be equivalent if the individual trees within the forest are equivalent in the traditional way. As an example, we again consider the 3-tuple 
\[ (\tau_1, \tau_2, \tau_3) = ((1 \ 2)(3 \ 4), (1 \ 3)(2 \ 4), (1 \ 4)(2 \ 3)) \]
which can be interpreted as the forest
\begin{center}
\begin{tikzpicture}
      	\node (1) at (0.5,3) [circle,draw, thick,inner sep=1.8pt] {1};
      	\node[below] at (-0.5,1) {1};
	\node[below] at (1,2) {3};
	\node[below] at (0.5,1) {2};
	\node[left] at (0,2)  {4};
	\draw[-*, shorten >=-2] (1) to (1,2);
	\draw[-*, shorten >=-2] (1) to (0,2);
	\draw[-*, shorten >=-2] (0,2) to (-0.5,1);
	\draw[-*, shorten >=-2] (0,2) to (0.5,1);
	
	\node (2) at (3.5,3) [circle,draw, thick,inner sep=1.8pt] {2};
      	\node[below] at (2.5,1) {1};
	\node[below] at (3.5,1) {3};
	\node[below] at (4,2) {2};
	\node[left] at (3,2)  {4};
	\draw[-*, shorten >=-2] (2) to (4,2);
	\draw[-*, shorten >=-2] (2) to (3,2);
	\draw[-*, shorten >=-2] (3,2) to (3.5,1);
	\draw[-*, shorten >=-2] (3,2) to (2.5,1);
	
	\node (3) at (6.5,3) [circle,draw, thick,inner sep=1.8pt] {3};
      	\node[below] at (5.5,1) {2};
	\node[left] at (6,2) {4};
	\node[below] at (6.5,1) {3};
	\node[below] at (7,2)  {1};
	\draw[-*, shorten >=-2] (3) to (7,2);
	\draw[-*, shorten >=-2] (3) to (6,2);
	\draw[-*, shorten >=-2] (6,2) to (6.5,1);
	\draw[-*, shorten >=-2] (6,2) to (5.5,1);
	
	\draw[-o, shorten >=-2] (1) to (3.5,4);
	\draw[-o, shorten >=-2] (2) to (3.5,4);
	\draw[-o, shorten >=-2] (3) to (3.5,4);
\end{tikzpicture}
\end{center}
after we have labelled all vertices following the method outlined in section 1.

The invariant associated to a forest with $r$ trees can again be written as a sum of terms of the form
\[ x_{a^{(1)}_{j_1^1} a^{(2)}_{j_2^1} \cdots a^{(r )}_{j_r^1}}x_{a^{(1)}_{j_1^2} a^{(2)}_{j_2^2} \cdots a^{(r )}_{j_r^2}} \cdots x_{a^{(1)}_{j_1^{2m}} a^{(2)}_{j_2^{2m}} \cdots a^{(r )}_{j_r^{2m}}}   \]
where the index $(i)$ of the subscript refers to the root of an individual tree in the forest, and $j_i^s = j_i^t$ if $s$ and $t$ are siblings in the $i$th tree. To build the invariant associated with our particular forest, we begin by examining the tree labelled (1). This tree has two pairs of siblings, $(1,2)$ and $(3,4)$, and so our invariant begins as
\[ x_{a^{(1)}_1}\_\_ \ x_{a^{(1)}_1}\_\_ \ x_{a^{(1)}_2}\_\_ \ x_{a^{(1)}_2}\_\_ \] 
Next, we see that the second tree has sibling pairs $(1,3)$ and $(2,4)$, so we continue building the invariant:
\[ x_{a^{(1)}_1a^{(2)}_1}\_ \ x_{a^{(1)}_1a^{(2)}_2}\_ \ x_{a^{(1)}_2a^{(2)}_1}\_ \ x_{a^{(1)}_2a^{(2)}_2}\_ \] 
Finally, the last placeholders of the invariant are filled by observing that the third tree has sibling pairs $(2,3)$ and $(1,4)$. The result is identical to the invariant associated to the same tuple of matchings determined earlier.
\[ x_{a^{(1)}_1a^{(2)}_1a^{(3)}_1} x_{a^{(1)}_1a^{(2)}_2a^{(3)}_2} x_{a^{(1)}_2a^{(2)}_1a^{(3)}_2}  x_{a^{(1)}_2a^{(2)}_2a^{(3)}_1} \]

\bigskip

\begin{table}[H]\label{invariants}
\caption{Representative of graph isomorphism class and corresponding invariant in the case $r=3,m=2$}
\begin{tabular}{c}   
\begin{tikzpicture}[scale=1.5]
\draw[thick] (1.3,-0.2) to (1.3,8.3);
\draw[thick] (1.3,-0.2) to (1.3,8.3);
\draw[thick, dashed] (-0.3,1.375) to (9.7,1.375);
\draw[thick, dashed] (-0.3,3.125) to (9.7,3.125);
\draw[thick, dashed] (-0.3,4.875) to (9.7,4.875);
\draw[thick, dashed] (-0.3,6.625) to (9.7,6.625);

\node (11) at (0,8) [fill,circle,inner sep=2pt]{};
\node (12) at (1,8) [fill,circle,inner sep=2pt]{};
\node (13) at (0,7) [fill,circle,inner sep=2pt]{};
\node (14) at (1,7) [fill,circle,inner sep=2pt]{};
\draw[very thick] (11) to (12); 
\draw[very thick] (13) to (14); 
\draw[very thick,red] (11) to [bend left=40] (12); 
\draw[very thick,red] (13) to [bend left=40] (14); 
\draw[very thick,blue] (11) to [bend right=40] (12); 
\draw[very thick,blue] (13) to [bend right=40] (14); 
\node at (1.5,7.5) [right] {$\sum_{a_1^{(1)}, a_2^{(1)}}^{n_1} \sum_{a_1^{(2)},a_2^{(2)}}^{n_2} \sum_{a_1^{(3)},a_2^{(3)}}^{n_3} x_{a_1^{(1)}a_1^{(2)}a_1^{(3)}}x_{a_1^{(1)}a_1^{(2)}a_1^{(3)}}x_{a_2^{(1)}a_2^{(2)}a_2^{(3)}}x_{a_2^{(1)}a_2^{(2)}a_2^{(3)}}$};
  
\node (21) at (0,6.25) [fill,circle,inner sep=2pt]{};
\node (22) at (1,6.25) [fill,circle,inner sep=2pt]{};
\node (23) at (0,5.25) [fill,circle,inner sep=2pt]{};
\node (24) at (1,5.25) [fill,circle,inner sep=2pt]{};
\draw[very thick] (21) to (22); 
\draw[very thick] (23) to (24); 
\draw[very thick,red] (21) to [bend left=40] (22); 
\draw[very thick,red] (24) to [bend left=40] (23); 
\draw[very thick,blue] (21) to (23); 
\draw[very thick,blue] (22) to (24); 
\node at (1.5,5.75) [right]  {$\sum_{a_1^{(1)}, a_2^{(1)}}^{n_1} \sum_{a_1^{(2)},a_2^{(2)}}^{n_2} \sum_{a_1^{(3)},a_2^{(3)}}^{n_3} x_{a_1^{(1)}a_1^{(2)}a_1^{(3)}}x_{a_1^{(1)}a_1^{(2)}a_2^{(3)}}x_{a_2^{(1)}a_2^{(2)}a_2^{(3)}}x_{a_2^{(1)}a_2^{(2)}a_1^{(3)}}$ };
     
\node (31) at (0,4.5) [fill,circle,inner sep=2pt]{};
\node (32) at (1,4.5) [fill,circle,inner sep=2pt]{};
\node (33) at (0,3.5) [fill,circle,inner sep=2pt]{};
\node (34) at (1,3.5) [fill,circle,inner sep=2pt]{};
\draw[very thick] (31) to (32); 
\draw[very thick] (33) to (34); 
\draw[very thick,blue] (31) to [bend left=40] (32); 
\draw[very thick,blue] (34) to [bend left=40] (33); 
\draw[very thick,red] (31) to (33); 
\draw[very thick,red] (32) to (34); 
\node at (1.5,4) [right] {$\sum_{a_1^{(1)}, a_2^{(1)}}^{n_1} \sum_{a_1^{(2)},a_2^{(2)}}^{n_2} \sum_{a_1^{(3)},a_2^{(3)}}^{n_3} x_{a_1^{(1)}a_1^{(2)}a_1^{(3)}}x_{a_1^{(1)}a_2^{(2)}a_1^{(3)}}x_{a_2^{(1)}a_2^{(2)}a_2^{(3)}}x_{a_2^{(1)}a_1^{(2)}a_2^{(3)}}$ };

\node (41) at (0,2.75) [fill,circle,inner sep=2pt]{};
\node (42) at (1,2.75) [fill,circle,inner sep=2pt]{};
\node (43) at (0,1.75) [fill,circle,inner sep=2pt]{};
\node (44) at (1,1.75) [fill,circle,inner sep=2pt]{};
\draw[very thick] (41) to (42); 
\draw[very thick] (43) to (44); 
\draw[very thick,red] (41) to [bend left=30] (43); 
\draw[very thick,red] (42) to [bend left=30] (44); 
\draw[very thick,blue] (41) to [bend right=30] (43); 
\draw[very thick,blue] (42) to [bend right=30] (44); 
\node at (1.5,2.25) [right] {$\sum_{a_1^{(1)}, a_2^{(1)}}^{n_1} \sum_{a_1^{(2)},a_2^{(2)}}^{n_2} \sum_{a_1^{(3)},a_2^{(3)}}^{n_3} x_{a_1^{(1)}a_1^{(2)}a_1^{(3)}}x_{a_1^{(1)}a_2^{(2)}a_2^{(3)}}x_{a_2^{(1)}a_2^{(2)}a_2^{(3)}}x_{a_2^{(1)}a_1^{(2)}a_1^{(3)}}$ };

\node (51) at (0,1) [fill,circle,inner sep=2pt]{};
\node (52) at (1,1) [fill,circle,inner sep=2pt]{};
\node (53) at (0,0) [fill,circle,inner sep=2pt]{};
\node (54) at (1,0) [fill,circle,inner sep=2pt]{};
\draw[very thick] (51) to (52); 
\draw[very thick] (53) to (54); 
\draw[very thick,blue] (51) to  (53); 
\draw[very thick,blue] (52) to (54); 
\draw[very thick,red] (51) to (54); 
\draw[very thick,red] (52) to (53); 
\node at (1.5,0.5) [right] {$ \sum_{a_1^{(1)}, a_2^{(1)}}^{n_1} \sum_{a_1^{(2)},a_2^{(2)}}^{n_2} \sum_{a_1^{(3)},a_2^{(3)}}^{n_3} x_{a_1^{(1)}a_1^{(2)}a_1^{(3)}}x_{a_1^{(1)}a_2^{(2)}a_2^{(3)}}x_{a_2^{(1)}a_1^{(2)}a_2^{(3)}}x_{a_2^{(1)}a_2^{(2)}a_1^{(3)}} $};
        \end{tikzpicture} 
\end{tabular}
\end{table}

\begin{bibdiv}
\bibliographystyle{alphabetic}

\begin{biblist}

\bib{Brauer}{article}{
    AUTHOR = {Brauer, Richard},
     TITLE = {On algebras which are connected with the semisimple continuous
              groups},
   JOURNAL = {Ann. of Math. (2)},
  FJOURNAL = {Annals of Mathematics. Second Series},
    VOLUME = {38},
      YEAR = {1937},
    NUMBER = {4},
     PAGES = {857--872},
      ISSN = {0003-486X},
     CODEN = {ANMAAH},
   MRCLASS = {Contributed Item},
  MRNUMBER = {1503378},

}

\bib{Diaconis}{article}{
    AUTHOR = {Diaconis, Persi W.},
	AUTHOR ={ Holmes, Susan P.},
     TITLE = {Matchings and phylogenetic trees},
   JOURNAL = {Proc. Natl. Acad. Sci. USA},
  FJOURNAL = {Proceedings of the National Academy of Sciences of the United
              States of America},
    VOLUME = {95},
      YEAR = {1998},
    NUMBER = {25},
     PAGES = {14600--14602 (electronic)},
      ISSN = {1091-6490},
     CODEN = {PNASFB},
   MRCLASS = {92D15},
  MRNUMBER = {1665632},
}

\bib{GW}{book} {
    	AUTHOR = {Goodman, Roe},
	AUTHOR = {Wallach, Nolan R.},
     	TITLE = {Symmetry, representations, and invariants},
    	SERIES = {Graduate Texts in Mathematics},
    	VOLUME = {255},
 	PUBLISHER = {Springer},
   	ADDRESS = {Dordrecht},
      	YEAR = {2009},
	REVIEW={\MR{2522486 (2011a:20119)}}
}

\bib{Stanley}{article}{
   AUTHOR = {Hanlon, Philip J.},
Author={ Stanley, Richard P. },
Author ={ Stembridge, John
              R.},
     TITLE = {Some combinatorial aspects of the spectra of normally
              distributed random matrices},
 BOOKTITLE = {Hypergeometric functions on domains of positivity, {J}ack
              polynomials, and applications ({T}ampa, {FL}, 1991)},
    SERIES = {Contemp. Math.},
    VOLUME = {138},
     PAGES = {151--174},
 PUBLISHER = {Amer. Math. Soc.},
   ADDRESS = {Providence, RI},
      YEAR = {1992},
   MRCLASS = {05E05 (15A18 60E99 62H10)},
  MRNUMBER = {1199126 (93j:05164)},
}

\bib{HeroWillenbring}{article}{
   author={Hero, Michael W.},
   author={Willenbring, Jeb F.},
   title={Stable Hilbert series as related to the measurement of quantum entanglement},
   JOURNAL = {Discrete Math.},
  FJOURNAL = {Discrete Mathematics},
VOLUME = {309},
      YEAR = {2009},
    NUMBER = {23-24},
     PAGES = {6508--6514},
      ISSN = {0012-365X},
     CODEN = {DSMHA4},
   MRCLASS = {13D40 (13A50 81P40)},
  MRNUMBER = {2558615 (2011a:13033)},
MRREVIEWER = {Frank D. Grosshans},
       DOI = {10.1016/j.disc.2009.06.021},
	REVIEW={\MR{2558615 (2011a:13033)}}
}

\bib{HWW}{article}{
    AUTHOR = {Hero, Michael W.},
	Author = {Willenbring, Jeb F.},
	Author = { Williams, Lauren Kelly},
     TITLE = {The measurement of quantum entanglement and enumeration of
              graph coverings},
 BOOKTITLE = {Representation theory and mathematical physics},
    SERIES = {Contemp. Math.},
    VOLUME = {557},
     PAGES = {169--181},
 PUBLISHER = {Amer. Math. Soc.},
   ADDRESS = {Providence, RI},
      YEAR = {2011},
   MRCLASS = {22E47 (05A19 05C30 81P40)},
  MRNUMBER = {2848925 (2012i:22021)},
       DOI = {10.1090/conm/557/11031},
       URL = {http://dx.doi.org/10.1090/conm/557/11031},
}

\bib{Hilbert}{article}{
    AUTHOR = {Hilbert, David},
     TITLE = {Ueber die {T}heorie der algebraischen {F}ormen},
   JOURNAL = {Math. Ann.},
  FJOURNAL = {Mathematische Annalen},
    VOLUME = {36},
      YEAR = {1890},
    NUMBER = {4},
     PAGES = {473--534},
      ISSN = {0025-5831},
     CODEN = {MAANA},
   MRCLASS = {Contributed Item},
  MRNUMBER = {1510634},
}

\end{biblist}
\end{bibdiv}

\end{document}